\def\NAT@def@citea{\def\@citea{\NAT@separator}}
\theoremstyle{plain}
\newtheorem{theorem}{Theorem}[section]
\newtheorem{lemma}[theorem]{Lemma}
\newtheorem{corollary}[theorem]{Corollary}
\theoremstyle{definition}
\newtheorem{definition}[theorem]{Definition}
\newtheorem{example}[theorem]{Example}
\theoremstyle{remark}
\newtheorem{remark}{Remark}
\begin{document}


\title{Accelerated Bregman gradient methods for relatively smooth and relatively Lipschitz continuous minimization problems}

\author{
\name{O.~S.~Savchuk\textsuperscript{a,b}\thanks{O.~S.~Savchuk. Email: oleg.savchuk19@mail.ru}, M.~S.~Alkousa\textsuperscript{c,d}  \thanks{M.~S.~Alkousa. Email: m.alkousa@innopolis.ru}, A.~S.~Shushko\textsuperscript{d}\thanks{A.~S.~Shushko. Email: shushko.and@gmail.com},    A.~A.~Vyguzov\textsuperscript{d,b}\thanks{A.~A.~Vyguzov. Email: avyguzov@yandex.ru}, F.~S.~Stonyakin\textsuperscript{d,a,c}\thanks{F.~S.~Stonyakin. Email: fedyor@mail.ru}, D.~A.~Pasechnyuk\textsuperscript{e,f} \thanks{D.~A.~Pasechnyuk. Email: pasechnyuk2004@gmail.com} and A.~V.~Gasnikov\textsuperscript{c,d,b} \thanks{A.~V.~Gasnikov. Email: gasnikov@yandex.ru}}
\affil{\textsuperscript{a}V. I. Vernadsky Crimean Federal University, Vernadsky Avenue, 4, Simferopol, Russia; \textsuperscript{b}Adyghe State University, 208 Pervomayskaya st., Maykop, Russia; \textsuperscript{c}Innopolis University, Innopolis, Universitetskaya Str., 1, 420500, Russia; \textsuperscript{d}Moscow Institute of Physics and Technology, 9 Institutsky lane, Dolgoprudny, 141701, Russia; \textsuperscript{e}Mohamed bin Zayed University of Artificial Intelligence, Abu Dhabi, UAE;
\textsuperscript{f}Ivannikov Institute for System Programming of the Russian Academy of Sciences, Russia
}}

\maketitle

\begin{abstract}
In this paper, we propose some accelerated methods for solving optimization problems under the condition of relatively smooth and relatively Lipschitz continuous functions with an inexact oracle.  We consider the problem of minimizing the convex differentiable and relatively smooth function concerning a reference convex function. The first proposed method is based on a similar triangles method with an inexact oracle, which uses a special triangular scaling property for the used Bregman divergence. The other proposed methods are non-adaptive and adaptive (tuning to the relative smoothness parameter) accelerated Bregman proximal gradient methods with an inexact oracle. These methods are universal in the sense that they are applicable not only to relatively smooth but also to relatively Lipschitz continuous optimization problems.  We also introduced an adaptive intermediate Bregman method which interpolates between slower but more robust algorithms non-accelerated and faster, but less robust accelerated algorithms. We conclude the paper with the results of numerical experiments demonstrating the advantages of the proposed algorithms for the Poisson inverse problem.
\end{abstract}

\begin{keywords}
Convex optimization; accelerated method; intermediate method; relative smoothness; relative Lipschitz continuity
\end{keywords}

\section{Introduction}
With the increase in the number of applications that can be modeled as large-scale (or even huge-scale) optimization problems (some of such applications arising in machine learning, deep learning, data science, control, signal processing, statistics, and so on), first-order methods, which require low iteration cost as well as low memory storage, have received much interest over the past few decades to solve the optimization problems, in the smooth and non-smooth cases \cite{Beck2017First,nest_lec}.  Recently, there was introduced a new direction for the research, associated with the development of gradient-type methods for optimization problems with relatively Lipschitz-continuous \cite{lu2018relativecontinuity}, relatively smooth \cite{Bauschke2017first} and relatively strongly convex \cite{Lu2018Relatively} functions. Such methods are urgently in high demand due to numerous theoretical and applied problems. For example, the D-optimal design problem (see Example \ref{ex:DOptDes}), and the Poisson inverse problem (see Example \ref{ex:InvPois}) turned out to be relatively smooth \cite{Lu2018Relatively,hanzely2021accelerated}. It is also quite interesting that in recent years there have been appeared many applications of these methods to solve auxiliary problems that arise in tensor methods for convex minimization problems of the second and higher orders \cite{Nest_tens,Nest_core}. 

Many real-world optimization problems involve minimizing relatively smooth functions using an inexact oracle that provides noisy or approximate access to the functions and gradient information. Some examples of such problems include training machine learning models on large datasets, where evaluating the full information concerning the objective function (its value and gradient at iterated points) is computationally expensive, so stochastic gradients are used instead, optimizing hyperparameters of machine learning models using validation set performance, which is a noisy estimate of the true objective, tuning the parameters of physical systems or simulations that have inherent noise or uncertainty \cite{bottou2018optimization,bergstra2012random}. In this context, the concept of relative smoothness plays a crucial role. This generalization of the standard smoothness condition allows for more flexibility in the choice of the reference function, which can lead to improved computational guarantees for first-order optimization methods. 

The concept of relative smoothness first introduced in \cite{Bauschke2017first}, it is an important condition that generalizes the standard Lipschitz continuity of the gradient, allows for the analysis of convergence of gradient methods for a broader class of functions, and enables efficient optimization in many applications. Further lower bounds for relatively smooth problems were established in \cite{rel_smooth_review}. It can be effectively used in machine learning problems, where the concept is applied to analyze the convergence of gradient methods when training logistic regression models with non-Euclidean regularizers. In signal processing, relative smoothness is used to study the convergence of proximal splitting methods for solving composite optimization problems. In the seminal work \cite{Lu2018Relatively}, the authors demonstrated the versatility of the relative smoothness condition by applying it to various settings of optimization problems, including the D-Optimal Design Problem and the minimization of the Volumetric Barrier Function. They showed that by exploiting the relative smoothness structure of these problems, it is possible to design efficient gradient methods with improved convergence rates.

Motivated by applications in machine learning and engineering, where the presence of noise or uncertainty is common, the concept of an inexact oracle, where the function and gradient values have some error, can significantly degrade the performance of accelerated gradient methods. In some cases, non-accelerated methods that are more robust to noise can outperform accelerated ones. Recently the authors of \cite{Devolder2014First} proposed an ''intermediate'' method that interpolates between accelerated and non-accelerated convergence rates, using a special parameter $p$ that controls the degree of acceleration, depending on the current accumulation of the solution estimation inaccuracy. When $p$ tends to $2$, it recovers the accelerated method, while $p$ tends to $1$, gives the non-accelerated convergence rate, but at the same time it guarantees the achievement of a higher accuracy of the solution to the problem (see the second term of Remark \ref{rem:exact_setting}).

In this paper, we focus on developing similar previously mentioned optimization methods for relatively smooth problems with an inexact oracle, which can be considered the main novelty of the work. For the convex optimization problem, we propose two new families of methods, the first is an adaptive fast gradient method with an inexact oracle. It is based on the similar triangles method \cite{stonyakin2021inexact}, and exploits the idea of an inexact oracle and the triangular scaling property of the Bregman divergence corresponding to the used prox structure of the problem. This proposed algorithm is universal in the sense that it is applicable not only to relatively smooth but also to relatively Lipschitz continuous optimization problems. For the second family, we propose a different picture, where we propose non-adaptive and adaptive accelerated Bregman proximal gradient methods with an inexact oracle. These methods are also universal in the sense that they are applicable not only to relatively smooth but also to relatively Lipschitz continuous optimization problems. Further, based on \cite{devolder2013intermediate,dvurechensky2016stochastic,kamzolov2021universal}, we propose an intermediate method parameterized by $p\in [1,2]$ and interpolating between non-accelerated and accelerated methods. The proposed method is provided by an adaptive way of choosing parameters $p$ and $L$ (relative smoothness parameter) along the iteration process, which leads to better robustness by slowing down the convergence rate.

\subsection{Contributions}
To sum it up, the contributions of the paper can be formulated as follows.
\begin{enumerate}
    \item We proposed an adaptive fast gradient method with an inexact oracle. We concluded its convergence rate for the convex optimization problems and proved that it is applicable not only to relatively smooth but also to relatively Lipschitz continuous optimization problems.

    \item We proposed non-adaptive and adaptive accelerated Bregman proximal gradient methods with an inexact oracle.  We concluded their convergence rate and also proved that they are applicable not only to relatively smooth but also to relatively Lipschitz continuous optimization problems. 

    \item We proposed an automatically adaptive intermediate Bregman method, that interpolates between non-accelerated and accelerated methods, with an inexact oracle, and concluded its convergence rate.  

    \item  We provided some numerical experiments for the Poisson inverse problem, with a comparison of the proposed methods with other known methods. 
\end{enumerate}

\subsection{Paper organization}
The paper consists of an introduction and 6 main sections. In Sect. \ref{sect:basics} we mentioned the statement of the problem, some fundamental concepts (such as Bregman divergence, relative smoothness, inexact oracle, triangular scaling property), and some examples. Sect. \ref{sect:FGM} is devoted to an adaptive fast gradient method using an inexact oracle with analysis of its convergence. In Sect. \ref{sect:auccBPGM} we proposed non-adaptive and adaptive accelerated Bregman proximal gradient methods with an inexact oracle and we concluded their convergence rate.  Sect. \ref{sec:AIBM} is devoted to an adaptive intermediate method, with analysis of its convergence, that interpolates between non-accelerated and accelerated methods. In Sect. \ref{sect:numerical_exper} we presented the results of some numerical experiments, these results demonstrate the efficiency of the proposed methods for the Poisson inverse problem. In the last section, we briefly presented the results that were reached in the paper.

\section{Problem statement, basic concepts and definitions}\label{sect:basics}

Let $\mathbb{E}$ be an $n$-dimensional real vector space and $\mathbb{E}^*$ be its dual. We denote the value of a linear function $g \in \mathbb{E}^*$ at $x\in \mathbb{E}$ by $\langle g, x \rangle$. Let $\|\cdot\|$ be some norm on $\mathbb{E}$, $\|\cdot\|_{*}$ be its dual, defined by $\|g\|_{*} = \max\limits_{x} \big\{ \langle g, x \rangle, \| x \| \leq 1 \big\}$. $\|\cdot\|_2$ denotes the standard Euclidean norm.  We use $\nabla f(x)$ to denote the gradient of a differentiable function $f$ at a point $x \in {\rm dom} f$ and, with a slight abuse of notation, a subgradient of a convex function $f$ at a point $x \in {\rm dom} f$.

Let $Q \subset \mathbb{E}$ be a closed convex set, $f: Q \longrightarrow \mathbb{R}$ be a convex differentiable on an open set that contained the relative interior of $Q$ (denoted as ${\rm rint} Q$). 

In this paper, we consider the following optimization problem
\begin{equation}\label{main_composite_problem}
    \min\limits_{x\in Q} f(x). 
\end{equation}

For solving the problem \eqref{main_composite_problem}, we can consider the \textit{proximal gradient method}, which, by taking $x_0 \in {\rm rint} Q$ as an initial point, has the following form 
\begin{equation}\label{eq_PGM}
    x_{k+1} = \arg\min\limits_{x\in Q} \left\{f(x_k) + \left\langle \nabla f(x_k), x - x_k \right\rangle + \frac{L}{2} \|x - x_k \|^2\right\}, \; \; \forall k \geq 0,
\end{equation}
with $L > 0$ (see \eqref{eq_3}). The assumption that $Q$ is simple means that the minimization problem in \eqref{eq_PGM} can be solved efficiently, especially if it admits a closed-form solution. 

In \eqref{eq_PGM}, we use the gradient of the function $f$ at $x_k$, i.e., $\nabla f(x_k)$, to construct a local quadratic approximation of $f$ near $x_k$. Assuming that the function $f$ is bounded below, the convergence of the \textit{proximal gradient method} can be established
if $f(x_{k+1}) \leq  f(x_k)$ for all $k \geq 0 $. A sufficient condition for this to hold is that the quadratic approximation of $f$ in \eqref{eq_PGM} is an upper approximation. This is the main idea of many general methods for nonlinear optimization, where a common assumption is for the gradient of $f$ to satisfy a uniform Lipschitz condition. That is, there exists $L > 0 $ such that the following inequality holds
\begin{equation}\label{eq_3}
    \| \nabla f(x) - \nabla f(y) \|_* \leq L \| x - y \|,  \quad  \forall x, y \in { \rm rint} Q.
\end{equation}

From \eqref{eq_3}, it follows that \cite{nest_lec} 
\begin{equation}\label{eq_4}
    f(y) \leq f(x) + \langle \nabla f(x), y - x \rangle + \frac{L}{2} \|y - x \|^2,  \quad  \forall x \in {\rm rint} , y \in Q.
\end{equation}

For the \textit{proximal gradient method} \eqref{eq_PGM}, it can be shown  that its convergence rate will be \cite{komposite,komposite_2}
\begin{equation}\label{eq_40_}
    f(x_k) - f(x) \leq \frac{L \|x - x_0 \|^2}{ 2 k} = \mathcal{O}\left(\frac{1}{k}\right), \quad  \forall x \in Q, \; \text{and} \; \forall k \geq 1.
\end{equation}

Under the same assumptions, \textit{accelerated proximal gradient methods} can achieve a faster convergence rate \cite{komposite,komposite_acc} , which is
\begin{equation} \label{eq_400__}
    f(x_k) - f(x) \leq \frac{2 L \|x - x_0 \|^2}{(k + 2)^2}  = \mathcal{O}\left(\frac{1}{k^2}\right), \quad  \forall x \in Q, \; \text{and} \; \forall k \geq 0 . 
\end{equation}
This convergence rate is optimal (up to a constant factor) for a given class of convex optimization problems \cite{compos_optim,nest_lec}.

Although the uniform smoothness condition \eqref{eq_3} is central to the design and analysis of first-order methods, there are many applications where the objective function does not have this property, despite being convex and differentiable. For example, in the Poisson inverse problem \cite{Entropy,poi} (see Example \ref{ex:InvPois}) and the D-optimal design problem \cite{D-optim,D-opt} (see Example \ref{ex:DOptDes}), the objective functions involve a logarithm in the form of the log-determinant or relative entropy, whose gradients may increase sharply toward the boundary of the feasible region. To design efficient first-order algorithms for such problems, the notion of \textit{relative smoothness} has been introduced in several works (see e.g. \cite{bregman,Bauschke2017first,Lu2018Relatively}).

\subsection{Bregman divergence and relative smoothness}

Let $d : Q \longrightarrow \mathbb{R}$  be a distance-generating function (also called prox-function or reference function) that is continuously differentiable and $1$-strongly convex.

\begin{definition}\label{def_con}
The Bregman divergence (also called Bregman distance) associated with the prox-function $d$ is defined as 
\begin{equation}\label{eq_5}
    V(x,y) = d(x) - d(y) - \langle \nabla d(y), x - y \rangle, \quad \forall x, y\in Q.
\end{equation}
As a result, we have $V(x, y) \geq \frac{1}{2} \|x - y\|^2.$
\end{definition}

Some standard examples of Bregman divergence are as follows:
\begin{enumerate}
    \item \textbf{Euclidean prox-function:}  Let $Q$ be a convex subset of $\mathbb{R}^n$ endowed with the Euclidean norm $\|\cdot \|_2$. Then, the \textit{Euclidean prox-function} on $Q$ is defined as $d(x) = \frac{1}{2} \|x\|_2^2 $ and the corresponding Bregman divergence is the standard square distance $V(x, y) = \frac{1}{2} \|x - y\|_2^2, \, \forall x, y \in Q. $

    \item \textbf{Entropic prox-function:} Let $Q = \Delta_n : = \left\{x \in \mathbb{R}^n_+:  \sum_{i=1}^nx_i=1 \right\}$ be the unit simplex in $\mathbb{R}^n$ endowed with the $L_1$-norm $\|\cdot \|_1$. Then, the \textit{entropic prox-function} (or \textit{Boltzmann-Shannon entropy}) on $Q$ is $d(x) = \sum_{i = 1}^{n} x_i \ln(x_i) $ and the corresponding Bregman divergence is the relative entropy $V(x, y) =\sum_{i = 1}^{n}  x_i \ln \left(x_i / y_i\right), \; \forall x \in Q, \forall y \in {\rm rint} Q. $

    If $d(x) = - \sum_{i = 1}^{n} x_i \ln(x_i) $. Then the corresponding Bregman divergence (called the generalized Kullback-Leibler (KL) divergence) is 
    \begin{equation}\label{KL_divergence}
        V(x,y) :=V_{\rm KL} (x, y) = \sum_{i = 1}^{n} \left( x_i \ln \left(\frac{x_i}{y_i}\right) - x_i - y_i \right). 
    \end{equation}

    \item \textbf{Log-barrier prox-function \cite{Chen1993Convergence}:} Let $Q = \mathbb{R}_{++}^n$ be the open positive orthant of $\mathbb{R}^n$. Then, the \textit{log-barrier prox-function} (or \textit{Burg's entropy}) on $Q$ is defined as $d(x) = - \sum_{i = 1}^{n} \ln (x_i)$. The corresponding Bregman divergence is known as the \textit{Itakura-Saito} (IS) divergence and is given by \begin{equation}\label{IS_divergence}
        V(x, y) :=V_{\rm IS} (x, y) = \sum_{i = 1}^{n} \left(\frac{x_i}{y_i} - \ln \left(\frac{x_i}{y_i}\right) - 1\right).
    \end{equation} 
\end{enumerate}

\begin{definition}\label{def_sm}
A function $f$ is called $L$-smooth relative to the prox-function $d$ on $Q$, if there exists $L > 0$ such that
\begin{equation}\label{eq_6}
    f(y) \leq f(x) + \langle \nabla f(x), y - x \rangle + L V(y,x),  \quad  \forall x \in {\rm rint}Q, y \in Q.
\end{equation}
\end{definition} 

The definition of relative smoothness in \eqref{eq_6} gives an upper approximation of $f$ similar to \eqref{eq_4}. Therefore, it is natural to consider a more general algorithm than \eqref{eq_PGM}, by replacing $\frac{1}{2}\|x- x_k\|^2$ in \eqref{eq_PGM} with Bregman divergence $V(x, x_k)$. Thus, by taking $x_0 \in {\rm rint} Q$ as an initial point, we get the following method (called \textit{Bregman proximal gradient} method) 
\begin{equation}\label{eq_relativePGM}
    x_{k+1} = \arg\min\limits_{x\in Q} \left\{f(x_k) + \left\langle \nabla f(x_k), x - x_k \right\rangle + L V(x,x_k)\right\}, \;\; \forall k \geq 0,
\end{equation}

Let us consider some examples of relatively smooth optimization problems.

\begin{example}[Poisson inverse problem \cite{hanzely2021accelerated}]\label{ex:InvPois}
Let $A \in \mathbb{R}^{m \times  n}_{+}$ be a non-negative observation matrix, and  $b \in \mathbb{R}^m_{++}$  be a noisy measurement vector. The task in this problem is to reconstruct a signal $x \in \mathbb{R}^n_+$ such that $Ax \approx b$. A natural measure of the closeness of two non-negative vectors is the KL-divergence defined in \eqref{KL_divergence}. Let us consider the following optimization problem
\begin{equation}\label{problem_invPoisson}
    \min_{x \in \mathbb{R}_+^n} \left\{ F(x) : = V_{\rm KL} (b, Ax)  + \psi (x)\right\}, 
\end{equation}
where $\psi$ is a simple regularization function. It is shown in \cite{Bauschke2017first} that the function $f(x) = V_{\rm KL} (b, Ax) $ is relatively $L$-smooth relative to $d(x) = - \sum_{i = 1}^{n} \ln(x_i)$ on $\mathbb{R}^n_+$ for any $L \geq \|b\|_1 = \sum_{i = 1}^{n} b_i$.  
\end{example}

\begin{example}[D-optimal design problem \cite{hanzely2021accelerated}]\label{ex:DOptDes}
Given $n$ vectors $v_1, v_2, \dots, v_n \in \mathbb{R}^m$ where $n \geq m + 1$. The problem is as follows
\begin{equation}\label{D-OptimProblem}
    \min_{x \in \Delta_n} \left\{f(x) := -\log \left(\det \left(\sum_{i=1}^nx_iv_iv_i^{\top} \right) \right)\right\},
\end{equation}
where $\Delta_n $ is a unit simplex in $\mathbb{R}^n$. By corresponding with the problem \eqref{main_composite_problem}, we find that $Q = \Delta_n$.    

The function $f$ defined in \eqref{D-OptimProblem}, is relatively $1$-smooth relative to the log-barrier prox-function $d(x) = -\sum_{i=1}^n \ln(x_i)$ on the set $\mathbb{R}^n_{++}$ \cite{Lu2018Relatively}. In this case, the Bregman divergence  is $V_{\rm IS}$, which defined in \eqref{IS_divergence}
\end{example}

\subsection{Inexact oracle}

In some works, such as \cite{inexact,stonyakin2021inexact} (for example only) one can find many first-order methods for solving different minimization problems $\min_{x \in Q} f(x)$, in many settings of the objective $f$, with so-called inexact oracle. 

Most minimization methods for such problems are constructed using some model of the objective $f$ at the current iterate $x_k$. This can be a quadratic model based on the $L$-smoothness of the objective
\begin{equation}\label{eq:modelFOM}
   f(x_k) + \langle \nabla f(x_k), x - x_k \rangle + \frac{L}{2} \|x - x_k\|^2.
\end{equation}

The literature on the first-order methods \cite{Devolder2014First,kamzolov2021universal} (for example only) also considers gradient methods with inexact information, relaxing the model \eqref{eq:modelFOM} to the following
\begin{equation}\label{eq:modelFOM_inexact}
   f_{\delta}(x_k) + \langle \nabla f_{\delta}(x_k), x - x_k \rangle + \frac{L}{2} \|x - x_k\|^2 + \delta, 
\end{equation}
where $(f_{\delta}, \nabla f_{\delta})$ (see Definition \ref{def1}) is called the inexact oracle, and this model represents an upper bound for the objective $f$. In particular, this relaxation allows obtaining universal gradient methods \cite{inexact_met}. 

\bigskip 

In what follows we mention the precise definition of the inexact oracle. 
\begin{definition}[\cite{stonyakin2021inexact}]\label{def1}
We say that the pair $(f_{\delta}(y), \nabla f_{\delta}(y))\in \mathbb{R}\times E^* $ is a $(\delta, L)$-oracle of the function $f$ at the point $y$ for $\delta,L>0$,  if it holds the following inequality
\begin{equation}\label{eq_8}
    0\leq f(x) - (f_{\delta}(y) + \left\langle\nabla f_{\delta}(y), x-y\right\rangle) \leq LV(x,y) + \delta, \quad \forall x \in Q.
\end{equation}
\end{definition}

\subsection{Triangular scaling property}

\begin{definition}\label{def1_1}
Let $d: Q \longrightarrow \mathbb{R}$ be a prox-function and $V(\cdot, \cdot)$ be a corresponding Bregman divergence. We say that $V(\cdot, \cdot)$ satisfies the \textit{triangular scaling property} \cite{hanzely2021accelerated}, with constant $\gamma > 0$ (which called the \textit{triangular scaling factor}),  if for any $x, z, \widetilde{z} \in Q$  it holds the following inequality
\begin{equation}\label{eq_15}
    V \left( (1 - \theta)x + \theta z, (1 - \theta)x + \theta \widetilde{z} \right) \leq \theta^{\gamma} V\left(z, \widetilde{z} \right),  \quad  \forall \theta \in [0, 1].
\end{equation}
\end{definition}

If $V(x,y)$ is jointly convex in $(x, y)$, then the inequality \eqref{eq_15} holds with $\gamma = 1$, because
\[
    V((1 - \theta)x + \theta z, (1 - \theta)x + \theta \widetilde{z}) \leq (1 - \theta)V(x,x) + \theta V( z,\widetilde{z} ) = \theta V(\widetilde{z}, z). 
\]

Next, we mention some examples of the Bregman divergence with the triangular scaling property. 

\begin{example}[Squared Euclidean distance]
Let $d(x) = \frac{1}{2}\| x \|^2_2$ and $V(x,y) = \frac{1}{2} \| x-y \|^2_2$. Indeed $V(x,y)$ is jointly convex in its two arguments. For this divergence, \eqref{eq_15} will be as the following
\[ 
    \frac{1}{2} \left\| (1-\theta)x + \theta z - ((1-\theta)x + \theta \widetilde{z})  \right \|^2_2 = \frac{1}{2} \left\| \theta (z - \widetilde{z})\right \|^2_2 =   \frac{\theta^2}{2} \left\|z - \widetilde{z} \right\|^2_2. 
\]
Therefore, the squared Euclidean distance satisfies the triangular scaling property with a scaling factor $\gamma = 2$. 
\end{example}

\begin{example}[Generalized Kullback-Leibler divergence]
Let $d(x) = - \sum_{i=1}^n x_{i} \ln (x_{i})$ defined on $\mathbb{R}^n_{+}$. The corresponding Bregman divergence is the KL divergence, defined in \eqref{KL_divergence}. 
Indeed $V_{\rm KL}(x,y)$ satisfies the triangular scaling property with a scaling factor $\gamma = 1$ (see \cite{hanzely2021accelerated}).    
\end{example}

\begin{example}[Itakuro-Saito divergence]
Let $d(x) = - \sum_{i=1}^n \ln (x_{i})$ defined on $\mathbb{R}^n_{++}$. The corresponding Bregman divergence is the IS divergence, defined in \eqref{IS_divergence}. Indeed $V_{\rm IS}(x,y)$ satisfies the triangular scaling property with a scaling factor $\gamma < 1$ (see \cite{hanzely2021accelerated}).     
\end{example}

\section{Fast gradient method for relatively smooth optimization problems with inexact oracle}\label{sect:FGM}
 
In this section, we assume that $V(\cdot, \cdot)$ satisfies the \textit{triangular scaling property} \cite{hanzely2021accelerated}, with constant $\gamma = 2$. We consider an adaptive fast gradient method (listed as Algorithm \ref{alg:cap} below) for solving the problem \eqref{main_composite_problem}, when $f$ is a relatively $L$-smooth function. It exploits the idea of an inexact oracle and the triangular scaling property of the Bregman divergence corresponding to the used prox structure. We mention here that this algorithm is an analog of the algorithm that was proposed in \cite{stonyakin2021inexact} based on the similar triangles method with an inexact model, which covers the idea of the inexact oracle as a special case. The proposed Algorithm \ref{alg:cap} is universal in the sense that it is applicable not only to relatively smooth but also to relatively Lipschitz continuous optimization problems.

\begin{algorithm}[htb]
\caption{Adaptive Fast Gradient Method with Inexact Oracle (AdapFGM).}\label{alg:cap}
\begin{algorithmic}[1]
\STATE \textbf{Input:} initial point $x_0$, $\{\delta_k\}_{k \ge 0}$ and $L_0 > 0$ such that $L_0 < 2L$. 
\STATE Set $y_0 := x_0, u_0 := x_0, \alpha_0 := 0, A_0:=\alpha_0$. 
\FOR{$k \ge 0$}
\STATE Find the smallest integer $i_k \ge 0$, such that
\begin{equation}\label{eq_10}
    f_{\delta_k}(x_{k + 1}) \leq f_{\delta_k}(y_{k + 1}) + \left\langle \nabla f_{\delta_k}(y_{k+1}),x_{k + 1}-y_{k + 1}\right\rangle + L_{k + 1}V(x_{k + 1}, y_{k+1}) + \delta_k,
\end{equation}
where $L_{k+1} = 2^{i_k - 1}L_k$, $\alpha_{k+1}$ is the largest root of the following equation
\begin{equation}\label{eq_11}
    A_{k+1}= L_{k+1}\alpha^2_{k+1}, \;\; \text{where } \;\; A_{k+1} := A_k + \alpha_{k+1}, 
\end{equation}
\begin{equation}\label{eq_12}
    y_{k+1} := \frac{\alpha_{k+1}u_k + A_kx_k}{A_{k+1}},
\end{equation}
\begin{equation}\label{eq_13}
    u_{k+1} := \arg\min\limits_{x\in Q} \left\{ \alpha_{k+1} \langle \nabla f_{\delta_k}(y_{k+1}), x - y_{k + 1}\rangle + V(x, u_k)\right\},
\end{equation}
\begin{equation}\label{eq_14}
    x_{k+1} := \frac{\alpha_{k+1}u_{k+1} + A_kx_k}{A_{k+1}}.
\end{equation}
\ENDFOR
\end{algorithmic}
\end{algorithm}

For Algorithm \ref{alg:cap}, we have the following result. 

\begin{theorem}\label{theo:FGM}
Assume that Bregman divergence $V(\cdot, \cdot)$ satisfies the triangular scaling property with scaling factor $\gamma = 2$. Let $R>0$, such that $V(x_*, x_0)\leq R^2$, where $x_*$ is a solution of the problem \eqref{main_composite_problem} closest to the initial point $x_0$. Then after $N \geq 1 $ iterations of the Algorithm \ref{alg:cap}, we have
\begin{equation}\label{eq_16}
    f(x_N) - f(x_*) 
    \leq \frac{8 L R^2}{(N+1)^2} + \frac{2\sum_{k=0}^{N-1}A_{k+1}\delta_k}{A_N},
\end{equation}
where $L>0$ is the relative smoothness parameter of the function $f$.
\end{theorem}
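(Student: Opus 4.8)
The plan is to follow the standard similar-triangles (estimate-sequence) argument, adapted to the Bregman geometry and the $(\delta,L)$-oracle, establishing a single one-step inequality that telescopes into \eqref{eq_16}.

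I would start with the geometric core of the method. Since both $y_{k+1}$ and $x_{k+1}$ are built from $x_k$ with the same weights in \eqref{eq_12} and \eqref{eq_14}, setting $\theta_{k+1}:=\alpha_{k+1}/A_{k+1}$ gives $y_{k+1}=(1-\theta_{k+1})x_k+\theta_{k+1}u_k$ and $x_{k+1}=(1-\theta_{k+1})x_k+\theta_{k+1}u_{k+1}$. The triangular scaling property \eqref{eq_15} with $\gamma=2$ then yields $V(x_{k+1},y_{k+1})\le \theta_{k+1}^2\,V(u_{k+1},u_k)$, and combining this with the defining relation $A_{k+1}=L_{k+1}\alpha_{k+1}^2$ from \eqref{eq_11} collapses the crucial product to $L_{k+1}A_{k+1}\,V(x_{k+1},y_{k+1})\le V(u_{k+1},u_k)$. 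This is exactly the term that will later cancel the negative Bregman remainder coming from the mirror step.

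Next I would extract the one-step inequality. Writing the affine lower model $\ell_{k+1}(x):=f_{\delta_k}(y_{k+1})+\langle\nabla f_{\delta_k}(y_{k+1}),x-y_{k+1}\rangle$ and abbreviating $g=\nabla f_{\delta_k}(y_{k+1})$, the optimality condition for $u_{k+1}$ in \eqref{eq_13} together with the three-point identity for $V$ gives $\alpha_{k+1}\langle g,u_{k+1}-x_*\rangle\le V(x_*,u_k)-V(x_*,u_{k+1})-V(u_{k+1},u_k)$. Using the identity $A_{k+1}x_{k+1}=\alpha_{k+1}u_{k+1}+A_kx_k$ and the affineness of $\ell_{k+1}$, I would decompose $A_{k+1}\ell_{k+1}(x_{k+1})=\alpha_{k+1}\ell_{k+1}(u_{k+1})+A_k\ell_{k+1}(x_k)$, bound $\ell_{k+1}(x_k)\le f(x_k)$ and $\ell_{k+1}(x_*)\le f(x_*)$ by the left inequality in \eqref{eq_8}, and feed in the descent step \eqref{eq_10}. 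Cancelling $V(u_{k+1},u_k)$ via the bound from the previous paragraph produces
\begin{equation*}
A_{k+1}\big(f(x_{k+1})-f(x_*)\big)\le A_k\big(f(x_k)-f(x_*)\big)+V(x_*,u_k)-V(x_*,u_{k+1})+2A_{k+1}\delta_k.
\end{equation*}
Telescoping over $k=0,\dots,N-1$, discarding $-V(x_*,u_N)\le 0$, and using $A_0=0$, $u_0=x_0$, $V(x_*,x_0)\le R^2$ gives $A_N\big(f(x_N)-f(x_*)\big)\le R^2+2\sum_{k=0}^{N-1}A_{k+1}\delta_k$.

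It then remains to bound $A_N$ from below. From $\alpha_{k+1}=A_{k+1}-A_k$ and $A_{k+1}=L_{k+1}\alpha_{k+1}^2$ one obtains $\sqrt{A_{k+1}}-\sqrt{A_k}\ge \tfrac{1}{2\sqrt{L_{k+1}}}$ for $k\ge1$, while the first step with $A_0=0$ gives the stronger $\sqrt{A_1}\ge \tfrac{1}{\sqrt{L_1}}$; summing and using $L_{k+1}\le 2L$ yields $\sqrt{A_N}\ge (N+1)/(2\sqrt{2L})$, i.e. $A_N\ge (N+1)^2/(8L)$, which converts $R^2/A_N$ into the leading term of \eqref{eq_16}. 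I expect the main obstacles to be two bookkeeping points rather than any single hard estimate: first, passing from the line-search condition \eqref{eq_10}, which is stated for the inexact value $f_{\delta_k}(x_{k+1})$, to an upper bound on the true $f(x_{k+1})$ — here one uses $f(x_{k+1})\le f_{\delta_k}(x_{k+1})+\delta_k$ (the $x=y$ case of \eqref{eq_8}), which is precisely the source of the factor $2$ in the error term; and second, justifying $L_{k+1}\le 2L$ by showing the adaptive search terminates, since the exact relative $L$-smoothness \eqref{eq_6} guarantees \eqref{eq_10} holds as soon as $L_{k+1}\ge L$. The first-step refinement needed to obtain $(N+1)^2$ instead of $N^2$ also deserves care, as it relies on $A_0=0$ removing the $\sqrt{A_0}$ denominator in that single step.
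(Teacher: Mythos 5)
Your proposal is correct and follows essentially the same route as the paper's proof: the same one-step inequality (the paper's Lemma \ref{def3}), obtained by combining the triangular scaling property with $A_{k+1}=L_{k+1}\alpha_{k+1}^2$, the mirror-step optimality lemma (Lemma \ref{def4}), and the two oracle inequalities that produce the $2\delta_k$ term, followed by telescoping with $x=x_*$ and the growth bound $A_N\geq (N+1)^2/(8L)$. The only difference is that you prove this growth bound directly (and check $L_{k+1}<2L$ via the line search), whereas the paper imports it as Lemma \ref{lem} from \cite{stonyakin2021inexact}; both versions are sound.
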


To prove Theorem \ref{theo:FGM}, we need the following lemmas.

\begin{lemma}[\cite{stonyakin2021inexact}]\label{def4}
Let $\varphi(x)$ be a convex function, and let us assume that for some $z\in Q$ the following inequality holds
\begin{equation*}
    y = \arg\min\limits_{x\in Q}\{ \varphi(x) + V(x,z)\}.
\end{equation*}
Then
\begin{equation*}
    \varphi(x) + V(x,z) \ge \varphi(y) + V(y,z) + V(x,y), \quad \forall x \in Q.
\end{equation*}
\end{lemma}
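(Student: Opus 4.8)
The plan is to prove this via the classical three-point (prox) inequality argument: the minimality of $y$ yields a first-order optimality (variational) inequality, which I would combine with the convexity of $\varphi$ and an exact three-point identity for the Bregman divergence $V$. The key algebraic fact I would isolate first is the identity
\[
    V(x,z) = V(x,y) + V(y,z) + \langle \nabla d(y) - \nabla d(z),\, x - y\rangle, \quad \forall x, y, z \in Q,
\]
which follows by expanding all three divergences via Definition \ref{def_con} and cancelling the $d(\cdot)$ terms; this is a routine computation that I would verify once and then reuse.

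Next, I would record the optimality condition. Since $y$ minimizes the convex function $x \mapsto \varphi(x) + V(x,z)$ over the convex set $Q$, and $V(\cdot, z)$ is differentiable with $\nabla_x V(x,z) = \nabla d(x) - \nabla d(z)$, the constrained first-order condition guarantees the existence of a subgradient $g \in \partial \varphi(y)$ with
\[
    \langle g + \nabla d(y) - \nabla d(z),\, x - y\rangle \ge 0, \quad \forall x \in Q.
\]
Convexity of $\varphi$ then gives $\varphi(x) \ge \varphi(y) + \langle g, x - y\rangle$ for every $x \in Q$.

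Finally, I would assemble the pieces. Adding $V(x,z)$ to the convexity inequality and substituting the three-point identity, I obtain
\[
    \varphi(x) + V(x,z) \ge \varphi(y) + V(y,z) + V(x,y) + \langle g + \nabla d(y) - \nabla d(z),\, x-y\rangle,
\]
and the optimality inequality above shows the trailing inner product is nonnegative, which yields the claim directly.

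The main obstacle I anticipate is formal rather than computational: justifying the optimality condition rigorously when $\varphi$ is only convex (not necessarily differentiable) and $y$ lies in the constrained set $Q$. This step relies on the subdifferential sum rule for $\varphi + V(\cdot, z)$ (legitimate since $V(\cdot, z)$ is differentiable) together with the standard characterization of a constrained minimizer by a variational inequality. I would therefore need to confirm that the relevant assumptions hold, namely that $d$ is continuously differentiable and that the minimizer stays in ${\rm rint}\, Q$, so that $\nabla d(y)$ is well defined; under the paper's standing hypotheses this is guaranteed, and everything else reduces to direct substitution.
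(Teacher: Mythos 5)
Your proof is correct. Note that the paper does not prove this lemma at all---it is imported by citation from \cite{stonyakin2021inexact}---and your argument (the three-point identity $V(x,z) = V(x,y) + V(y,z) + \langle \nabla d(y) - \nabla d(z), x - y\rangle$ combined with the variational inequality at the constrained minimizer and convexity of $\varphi$) is precisely the standard proof found in that reference and in the classical literature on Bregman proximal steps, so you have essentially reconstructed the intended argument, including the correct handling of nonsmooth $\varphi$ via the subdifferential sum rule.
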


\begin{lemma}[\cite{stonyakin2021inexact}]\label{lem}
Let the sequence $\{\alpha_k\}_{k \geq 0}$ satisfy the conditions
\begin{equation*}
    \alpha_0=0, \quad A_k =\sum\limits_{i=0}^k \alpha_i,\quad A_k=L_k\alpha^2_k,
\end{equation*}
where $L_k\leq 2 L $ for any $k\ge 0$. Then it holds the following inequality
\begin{equation*}
    A_k \ge \dfrac{(k+1)^2}{8L}, \quad \forall  k \ge 1. 
\end{equation*}
\end{lemma}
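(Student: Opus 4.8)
\textbf{Proof proposal for Lemma \ref{lem}.}

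The plan is to establish the bound $A_k \ge (k+1)^2/(8L)$ by induction on $k$, using the recurrence $A_k = L_k \alpha_k^2$ together with $\alpha_k = A_k - A_{k-1}$ (which follows from $A_k = \sum_{i=0}^k \alpha_i$) and the uniform upper bound $L_k \le 2L$. The key algebraic identity is that $\alpha_k$ is a root of the quadratic $L_k \alpha_k^2 - \alpha_k - A_{k-1} = 0$, so that $\alpha_k = \sqrt{A_k}/\sqrt{L_k}$, and one can extract a lower bound on the increment $\sqrt{A_k} - \sqrt{A_{k-1}}$ in terms of $1/\sqrt{L_k}$.

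First I would check the base case $k=1$: since $\alpha_0 = 0$ gives $A_0 = 0$, the equation $A_1 = L_1 \alpha_1^2$ with $A_1 = \alpha_1$ yields $\alpha_1 = 1/L_1$, so $A_1 = 1/L_1 \ge 1/(2L) = (1+1)^2/(8L)$, using $L_1 \le 2L$. This confirms the claim at $k=1$. Next, for the inductive step I would work with $\sqrt{A_k}$ rather than $A_k$ directly. From $A_k = L_k \alpha_k^2$ and $\alpha_k = A_k - A_{k-1}$ we get $\sqrt{A_k} = \sqrt{L_k}\,\alpha_k = \sqrt{L_k}(A_k - A_{k-1})$. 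Factoring $A_k - A_{k-1} = (\sqrt{A_k} - \sqrt{A_{k-1}})(\sqrt{A_k} + \sqrt{A_{k-1}})$ and dividing by $\sqrt{A_k}$ gives
\[
    1 = \sqrt{L_k}\,(\sqrt{A_k} - \sqrt{A_{k-1}})\,\frac{\sqrt{A_k}+\sqrt{A_{k-1}}}{\sqrt{A_k}} \le 2\sqrt{L_k}\,(\sqrt{A_k} - \sqrt{A_{k-1}}),
\]
where the inequality uses $\sqrt{A_{k-1}} \le \sqrt{A_k}$ so that $(\sqrt{A_k}+\sqrt{A_{k-1}})/\sqrt{A_k} \le 2$. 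Hence the telescoping increment satisfies $\sqrt{A_k} - \sqrt{A_{k-1}} \ge 1/(2\sqrt{L_k}) \ge 1/(2\sqrt{2L})$, using $L_k \le 2L$.

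Summing this per-step bound from $i=1$ to $k$ (with $\sqrt{A_0}=0$) yields $\sqrt{A_k} \ge k/(2\sqrt{2L})$, and therefore $A_k \ge k^2/(8L)$. This is slightly weaker than the claimed $(k+1)^2/(8L)$, so I would sharpen it by retaining the contribution of the first step more carefully: instead of discarding $\sqrt{A_{k-1}}$ in the factor $(\sqrt{A_k}+\sqrt{A_{k-1}})/\sqrt{A_k}$, I would carry out the induction directly on the target inequality $A_k \ge (k+1)^2/(8L)$, assuming it at $k-1$ and showing the increment $\sqrt{A_k}-\sqrt{A_{k-1}} \ge 1/(2\sqrt{2L})$ pushes $\sqrt{A_{k-1}} \ge k/(2\sqrt{2L})$ up to $\sqrt{A_k} \ge (k+1)/(2\sqrt{2L})$. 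The main obstacle is precisely this constant-tightening: getting the extra $+1$ in $(k+1)^2$ rather than the cruder $k^2$ requires either a careful choice of the inductive hypothesis at the shifted index or a sharper treatment of the base case, so I would set up the induction to prove $\sqrt{A_k} \ge (k+1)/(2\sqrt{2L})$ from the outset, with the base case $k=1$ already verified above giving $\sqrt{A_1} \ge 2/(2\sqrt{2L}) = 1/\sqrt{2L}$, consistent with $A_1 \ge 1/(2L)$.
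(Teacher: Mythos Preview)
The paper does not supply its own proof of this lemma; it is quoted verbatim from \cite{stonyakin2021inexact} and used as a black box in the proof of Theorem~\ref{theo:FGM}. So there is no in-paper argument to compare against.

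That said, your argument is correct and is in fact the standard one for this type of estimate. The key identity $\sqrt{A_k}=\sqrt{L_k}\,\alpha_k$ combined with $\alpha_k=A_k-A_{k-1}$ and the factorization $A_k-A_{k-1}=(\sqrt{A_k}-\sqrt{A_{k-1}})(\sqrt{A_k}+\sqrt{A_{k-1}})$ yields the per-step increment $\sqrt{A_k}-\sqrt{A_{k-1}}\ge 1/(2\sqrt{L_k})\ge 1/(2\sqrt{2L})$, and then induction on the hypothesis $\sqrt{A_k}\ge (k+1)/(2\sqrt{2L})$ closes the argument exactly as you describe, with the base case $A_1=1/L_1\ge 1/(2L)$ matching $(1+1)^2/(8L)$. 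The detour through the weaker $k^2/(8L)$ bound is unnecessary: you can simply state the inductive hypothesis at the right strength from the start, as you do in the final paragraph.
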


Now, let us prove the following lemma. 
\begin{lemma}\label{def3}
For Algorithm \ref{alg:cap}, it holds the following inequality
\begin{equation}\label{eq:lemmafund}
    A_{k+1}f(x_{k+1}) - A_k f(x_k) + V(x, u_{k+1}) - V(x,u_k)\leq \alpha_{k+1} f(x) + 2\delta_k A_{k+1}, \; \forall x \in Q.
\end{equation}
\end{lemma}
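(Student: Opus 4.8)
The plan is to mimic the standard estimating-sequence analysis of the similar triangles method, carrying the inexact-oracle error $\delta_k$ through each step. First I would record the two consequences of the geometry of the iterates. From \eqref{eq_12} and \eqref{eq_14} one has $x_{k+1}-y_{k+1}=\frac{\alpha_{k+1}}{A_{k+1}}(u_{k+1}-u_k)$, and moreover $x_{k+1}=(1-\theta)x_k+\theta u_{k+1}$, $y_{k+1}=(1-\theta)x_k+\theta u_k$ with $\theta=\alpha_{k+1}/A_{k+1}\in[0,1]$, since $A_k/A_{k+1}=1-\theta$ by \eqref{eq_11}. Applying the triangular scaling property \eqref{eq_15} with $\gamma=2$ to these two convex combinations (with $z=u_{k+1}$, $\widetilde z=u_k$) gives $V(x_{k+1},y_{k+1})\le\theta^2 V(u_{k+1},u_k)$; combining this with $A_{k+1}=L_{k+1}\alpha_{k+1}^2$ yields the crucial cancellation $A_{k+1}L_{k+1}V(x_{k+1},y_{k+1})\le V(u_{k+1},u_k)$, because $A_{k+1}L_{k+1}\theta^2=1$.

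Next I would multiply the line-search inequality \eqref{eq_10} by $A_{k+1}$ and substitute the two identities above, rewriting $\langle\nabla f_{\delta_k}(y_{k+1}),x_{k+1}-y_{k+1}\rangle$ as $\frac{\alpha_{k+1}}{A_{k+1}}\langle\nabla f_{\delta_k}(y_{k+1}),u_{k+1}-u_k\rangle$. This bounds $A_{k+1}f_{\delta_k}(x_{k+1})$ in terms of $A_{k+1}f_{\delta_k}(y_{k+1})$, a linear term in $u_{k+1}-u_k$, and $V(u_{k+1},u_k)$, plus $A_{k+1}\delta_k$. I would then invoke Lemma \ref{def4} for the subproblem \eqref{eq_13}, with $\varphi(x)=\alpha_{k+1}\langle\nabla f_{\delta_k}(y_{k+1}),x-y_{k+1}\rangle$, $z=u_k$, $y=u_{k+1}$, to replace $\alpha_{k+1}\langle\nabla f_{\delta_k}(y_{k+1}),u_{k+1}-y_{k+1}\rangle+V(u_{k+1},u_k)$ by $\alpha_{k+1}\langle\nabla f_{\delta_k}(y_{k+1}),x-y_{k+1}\rangle+V(x,u_k)-V(x,u_{k+1})$ for any $x\in Q$; this is where the telescoping divergence terms $V(x,u_k)-V(x,u_{k+1})$ appear.

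The remaining work is to linearize the oracle terms into $A_kf(x_k)+\alpha_{k+1}f(x)$. Splitting $u_{k+1}-u_k=(u_{k+1}-y_{k+1})+(y_{k+1}-u_k)$ and using $\alpha_{k+1}(y_{k+1}-u_k)=A_k(x_k-y_{k+1})$, which follows from \eqref{eq_12} and $A_{k+1}=A_k+\alpha_{k+1}$, I would split $A_{k+1}f_{\delta_k}(y_{k+1})=\alpha_{k+1}f_{\delta_k}(y_{k+1})+A_kf_{\delta_k}(y_{k+1})$ and regroup into $\alpha_{k+1}\big[f_{\delta_k}(y_{k+1})+\langle\nabla f_{\delta_k}(y_{k+1}),x-y_{k+1}\rangle\big]$ and $A_k\big[f_{\delta_k}(y_{k+1})+\langle\nabla f_{\delta_k}(y_{k+1}),x_k-y_{k+1}\rangle\big]$. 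The left (lower-bound) inequality of the $(\delta_k,L)$-oracle \eqref{eq_8}, applied at $x$ and at $x_k$, bounds these by $\alpha_{k+1}f(x)$ and $A_kf(x_k)$ respectively. Finally, to pass from $f_{\delta_k}(x_{k+1})$ on the left back to $f(x_{k+1})$, I would apply the right (upper-bound) inequality of \eqref{eq_8} at $x=y=x_{k+1}$, giving $f(x_{k+1})\le f_{\delta_k}(x_{k+1})+\delta_k$; this contributes a second $A_{k+1}\delta_k$ and produces the factor $2$ in $2\delta_k A_{k+1}$. Rearranging gives \eqref{eq:lemmafund}.

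I expect the main obstacle to be the bookkeeping around the oracle: one must use the lower bound of \eqref{eq_8} twice (to create $\alpha_{k+1}f(x)$ and $A_kf(x_k)$) and the upper bound once (to convert $f_{\delta_k}(x_{k+1})$ into $f(x_{k+1})$), keeping the two separate $\delta_k$ contributions straight, while ensuring that the triangular scaling with $\gamma=2$ is applied to exactly the pair of convex combinations for which $A_{k+1}L_{k+1}\theta^2=1$, so that the quadratic divergence term cancels cleanly against $V(u_{k+1},u_k)$.
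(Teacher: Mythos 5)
Your proposal is correct and follows essentially the same route as the paper's proof: the line-search inequality \eqref{eq_10} combined with the oracle upper bound at $x_{k+1}$ (giving the factor $2\delta_k$), the triangular scaling property with $\gamma=2$ applied to the convex combinations $x_{k+1},y_{k+1}$ so that $A_{k+1}=L_{k+1}\alpha_{k+1}^2$ cancels the divergence term, Lemma \ref{def4} on the subproblem \eqref{eq_13}, and the oracle lower bound used twice to produce $A_k f(x_k)$ and $\alpha_{k+1}f(x)$. The only cosmetic difference is that you multiply by $A_{k+1}$ at the outset and split the inner product via $u_{k+1}-u_k=(u_{k+1}-y_{k+1})+(y_{k+1}-u_k)$, whereas the paper keeps the $\frac{A_k}{A_{k+1}},\frac{\alpha_{k+1}}{A_{k+1}}$ weights throughout and rescales at the end — algebraically the same decomposition.
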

\begin{proof}
Based on Definition \ref{def1}, where $x = y$, we have $f(x) - \delta \leq f_{\delta}(x) \leq f(x)$, and using \eqref{eq_10} we obtain
\begin{align*}
    f(x_{k+1})  & \leq f_{\delta_k}(x_{k+1}) + \delta_k 
    \\& \leq  f_{\delta_k}(y_{k + 1}) + \left\langle \nabla f_{\delta_k}(y_{k+1}),x_{k + 1}-y_{k + 1} \right\rangle + L_{k + 1}V(x_{k + 1}, y_{k+1}) + 2 \delta_k. 
\end{align*}

From \eqref{eq_12} and \eqref{eq_14}, we get
\begin{align}
    f(x_{k+1}) & \leq  f_{\delta_k}(y_{k + 1}) + \left\langle \nabla f_{\delta_k}( y_{k + 1}), \frac{\alpha_{k+1} u_{k+1} + A_k x_k}{A_{k+1}} - y_{k + 1} \right\rangle  \nonumber
    \\& \quad + L_{k + 1} V \left( \frac{\alpha_{k+1} u_{k+1} + A_kx_k}{A_{k+1}}, \frac{\alpha_{k+1}u_{k} + A_kx_k}{A_{k+1}}\right) + 2 \delta_k. \label{11ffgh}
\end{align}

From \eqref{eq_11} we get $\alpha_{k+1} = A_{k+1} - A_k$. Thus we have
\begin{align*}
    & \quad \; V \left( \frac{\alpha_{k+1}u_{k+1} + A_kx_k}{A_{k+1}},\frac{\alpha_{k+1}u_{k} + A_kx_k}{A_{k+1}} \right)
    \\& = V \left( \frac{A_{k+1} - A_k}{A_{k+1}}u_{k+1} + \frac{A_k}{A_{k+1}}x_k,\frac{A_{k+1} - A_k}{A_{k+1}}u_{k} + \frac{A_k}{A_{k+1}}x_k\right) 
    \\& = V \left( \left( 1 - \frac{A_k}{A_{k+1}}\right) u_{k+1} + \frac{A_k}{A_{k+1}}x_k, \left( 1 - \frac{A_k}{A_{k+1}}\right) u_{k} + \frac{A_k}{A_{k+1}}x_k \right) 
\end{align*}

Let $\theta := \left( 1 - \frac{A_k}{A_{k+1}}\right)$. By using the triangular scaling property \eqref{eq_15}, with $\gamma = 2$, we get
\begin{align}\label{re124}
    V \left( \frac{\alpha_{k+1}u_{k+1} + A_kx_k}{A_{k+1}},\frac{\alpha_{k+1}u_{k} + A_kx_k}{A_{k+1}} \right) & \leq \left( 1 - \frac{A_k}{A_{k+1}} \right)^2V(u_{k+1},u_k)  \nonumber
    \\& = \frac{\alpha^2_{k+1}}{A^2_{k+1}}V(u_{k+1},u_k). 
\end{align}

Therefore, by substituting \eqref{re124} in \eqref{11ffgh}, we find
\begin{align*}
    f(x_{k+1}) & \leq  f_{\delta_k}(y_{k + 1}) + \left\langle\nabla f_{\delta_k} (y_{k + 1}), \frac{\alpha_{k+1} u_{k+1} + A_k x_k}{A_{k+1}} - y_{k + 1} \right\rangle 
    \\& \quad + L_{k + 1}\frac{\alpha^2_{k+1}}{A^2_{k+1}} V(u_{k+1}, u_k) + 2 \delta_k.
\end{align*}

Using $A_{k+1} = A_k + \alpha_{k+1}$, and the convexity of the function $ x \longmapsto \left\langle\nabla f_{\delta_k}(y),x-y\right\rangle$, we obtain
\begin{align*}
    f(x_{k+1}) & \leq  \frac{A_k}{A_{k+1}} \left(f_{\delta_k}(y_{k + 1}) + \left \langle\nabla f_{\delta_k}(y_{k + 1}),x_k- y_{k+1} \right\rangle \right) 
    \\& \quad + \frac{\alpha_{k+1}}{A_{k+1}} \left(f_{\delta_k} (y_{k + 1}) + \left\langle \nabla f_{\delta_k}(y_{k + 1}), u_{k+1} - y_{k+1} \right\rangle\right)
    \\& \quad + L_{k + 1}\frac{\alpha^2_{k+1}}{A^2_{k+1}} V(u_{k+1}, u_k) + 2 \delta_k.
\end{align*}

But $A_{k+1} = L_{k+1} \alpha_{k+1}^2$ (see \eqref{eq_11}). Thus,  we have
\begin{align}\label{542fg5f}
    f(x_{k+1}) & \leq  \frac{A_k}{A_{k+1}} \left(f_{\delta_k}(y_{k + 1}) + \left\langle \nabla f_{\delta_k} (y_{k + 1}), x_k- y_{k+1} \right\rangle \right)  \nonumber
    \\& \quad + \frac{\alpha_{k+1}}{A_{k+1}} \left(f_{\delta_k}(y_{k + 1}) + \left\langle\nabla f_{\delta_k} (y_{k + 1}),u_{k+1}-y_{k+1}\right\rangle + \frac{1}{\alpha_{k+1}}V(u_{k+1}, u_k)\right) \nonumber
    \\& \quad   + 2\delta_k.
\end{align}

Therefore, from \eqref{eq_8} (for the function $F$), and \eqref{542fg5f} we obtain
\begin{align}
    f(x_{k+1}) & \leq  \frac{A_k}{A_{k+1}} f(x_{k}) + \frac{\alpha_{k+1}}{A_{k+1}} \Bigg(f_{\delta_k}(y_{k + 1}) + \left\langle\nabla f_{\delta_k} (y_{k + 1}), u_{k+1}-y_{k+1}\right\rangle \nonumber
    \\& \qquad \qquad\qquad \qquad\qquad \qquad + \frac{1}{\alpha_{k+1}}V(u_{k+1},u_k) \Bigg) + 2\delta_k. \label{eq_18}
\end{align}

Based on Lemma \ref{def4} for the optimization problem \eqref{eq_13} with 
\[
    \varphi(x) = \alpha_{k+1}\left\langle\nabla f_{\delta_k}(y_{k + 1}),x-y_{k+1}\right\rangle, \;\;  z = u_k, \;\;  \text{and} \;\; y = u_{k+1}, 
\]
we obtain
\begin{align*}
     & \quad \; \alpha_{k+1} \left \langle \nabla f_{\delta_k}(y_{k + 1}),u_{k+1} - y_{k+1} \right \rangle + V(u_{k+1} , u_k) + V(x, u_{k+1})
     \\& \leq \alpha_{k+1}\left\langle \nabla f_{\delta_k}(y_{k + 1}),x - y_{k+1}\right\rangle + V(x,u_k). 
\end{align*}
i.e., 
\begin{align}\label{eq_19}
      \left \langle \nabla f_{\delta_k}(y_{k + 1}),u_{k+1} - y_{k+1} \right \rangle& 
      \nonumber
      + \frac{1}{\alpha_{k+1}} V(u_{k+1} , u_k)  \leq  
     \left\langle \nabla f_{\delta_k}(y_{k + 1}),x - y_{k+1}\right\rangle 
     \\& +  \frac{1}{\alpha_{k+1}} V(x,u_k) - \frac{1}{\alpha_{k+1}} + V(x, u_{k+1}). 
\end{align}

Combining \eqref{eq_18} and \eqref{eq_19}, we obtain
\begin{align*}
    f(x_{k+1}) & \leq \frac{A_{k}}{A_{k+1}} f(x_k) + \frac{\alpha_{k+1}}{A_{k+1}} \Big( f_{\delta_k}(y_{k+1}) + \left\langle\nabla f_{\delta_k}(y_{k + 1}),x-y_{k+1}\right\rangle
    \\& \quad + \frac{1}{\alpha_{k+1}}V(x,u_k) - \frac{1}{\alpha_{k+1}} V(x,u_{k+1})\Big) + 2\delta_k
    \\& = \frac{A_{k}}{A_{k+1}} f(x_k) + \frac{\alpha_{k+1}}{A_{k+1}} f(x)+ \frac{1}{A_{k+1}} V(x,u_k) - \frac{1}{A_{k+1}} V(x,u_{k+1}) + 2\delta_k. 
\end{align*}

From the last inequality, we get the following
\begin{equation*}
    A_{k+1}f(x_{k+1}) - A_k f(x_k) + V(x, u_{k+1}) - V(x,u_k)\leq \alpha_{k+1} f(x) + 2\delta_k A_{k+1}.
\end{equation*}
which is the desired inequality \eqref{eq:lemmafund}.
\end{proof}

Now, by using the previous Lemmas, we can prove Theorem \ref{theo:FGM}, as follows. 

\begin{proof}[Proof of Theorem \ref{theo:FGM}.]
By taking the sum of both sides of \eqref{eq:lemmafund}, over $k$ from 0 to $N - 1$, and set $x = x_{*}$, we get the following
\begin{equation}\label{eq_20}
    A_N f(x_N) \leq A_N f(x_*) + V(x_*, u_0) - V(x_*, u_N) + 2 \sum\limits_{k=0}^{N - 1} A_{k+1} \delta_k .
\end{equation}

But $V(x_*,u_{N}) \ge 0,$ and $u_0 = x_0$, thus we get
\begin{equation*}
    A_N f(x_N) - A_N f(x_*) \leq V(x_*, x_0) + 2\sum\limits_{k=0}^{N - 1} A_{k+1} \delta_k \leq R^2 + 2\sum\limits_{k=0}^{N - 1} A_{k+1} \delta_k.
\end{equation*}

Now, by dividing both sides of the last inequality by $A_N$, and since $A_N \geq \frac{(N+1)^2}{8L}$ (see Lemma \ref{lem}), we get the desired inequality \eqref{eq_16}.  
\end{proof}

\begin{remark}
For problem \eqref{main_composite_problem}, we will show that the Algorithm \ref{alg:cap} is universal, namely, we will prove that this method is applicable not only to relatively smooth but also to relatively Lipschitz continuous optimization problems. 

In \cite{adapt_alg} it is shown that if the function $f$ is relatively Lipschitz continuous, then for any fixed $L>0$ and $\delta>0$ the following inequality holds
\begin{equation*}
    f(x)\leq f(y)+\langle \nabla f(y), x-y\rangle + L V(x,y)+L V(y,x)+\delta, \quad \forall x,y\in Q.
\end{equation*}

Therefore, for $L_{k+1}\geq L$, taking into account the triangular scaling property for Bregman divergence, we obtain
\begin{align}
    f(x_{k+1}) & \leq f(y_{k+1})+\langle \nabla f(y_{k+1}), x_{k+1}-y_{k+1}\rangle + L_{k+1} V(x_{k+1},y_{k+1}) \nonumber
    \\& \quad +L_{k+1} V(y_{k+1},x_{k+1})+\delta \nonumber
    \\& = f(y_{k+1}) + \left\langle \nabla  f(y_{k+1}), \dfrac{\alpha_{k+1} u_{k+1} + A_k x_k}{A_{k+1}}-\dfrac{\alpha_{k+1} u_{k} + A_k x_k}{A_{k+1}} \right\rangle  \nonumber
    \\& \quad + L_{k+1} V\left(\dfrac{\alpha_{k+1} u_{k+1} + A_k x_k}{A_{k+1}}, \dfrac{\alpha_{k+1} u_{k} + A_k x_k}{A_{k+1}}\right)  \nonumber 
    \\& \quad + L_{k+1} V\left(\dfrac{\alpha_{k+1} u_{k} + A_k x_k}{A_{k+1}}, \dfrac{\alpha_{k+1} u_{k+1} + A_k x_k}{A_{k+1}}\right)+\delta  \nonumber
    \\& \leq f(y_{k+1})+\dfrac{\alpha_{k+1}}{A_{k+1}}\langle \nabla f(y_{k+1}), u_{k+1}-u_{k}\rangle  \nonumber
    \\& \quad +L_{k+1} V\left(\left(1-\dfrac{A_{k}}{A_{k+1}}\right) u_{k+1}+\dfrac{A_k}{A_{k+1}}x_k,\left(1-\dfrac{A_{k}}{A_{k+1}}\right)u_{k}+\dfrac{A_k}{A_{k+1}} x_k \right)     \nonumber
    \\& \quad + L_{k+1} V \left(\left(1-\dfrac{A_{k}}{A_{k+1}}\right)u_{k}+\dfrac{A_k}{A_{k+1}}x_k,\left(1-\dfrac{A_{k}}{A_{k+1}}\right)u_{k+1}+\dfrac{A_k}{A_{k+1}}x_k\right)+\delta       \nonumber
    \\& \leq f(y_{k+1})+\dfrac{\alpha_{k+1}}{A_{k+1}}\langle \nabla f(y_{k+1}), u_{k+1} - u_{k}\rangle     \nonumber
    \\& \quad +\dfrac{L_{k+1} \alpha^2_{k+1}}{A^2_{k+1}} \left( V(u_{k+1}, u_k) + V(u_k, u_{k+1})\right) + \delta    \nonumber
    \\& = f(y_{k+1}) + \dfrac{1}{A_{k+1}} \left(\alpha_{k+1} \langle \nabla f(y_{k+1}), u_{k+1} - u_{k}\rangle+ V(u_{k+1},u_k) + V(u_k,u_{k+1})\right)      \nonumber
    \\& \quad +\delta. \label{1111}
\end{align}

Let us consider as an $(\delta, L)$-oracle of the function $f$ the pair $(f_\delta (y), \nabla f_\delta (y)) = (f(y), \nabla f(y))$. Since $u_{k+1}:=\arg\min\limits_{x\in Q} \left\{\alpha_{k+1}\langle \nabla f(y_{k+1}), x-y_{k+1}\rangle + V(x,u_k)\right\}$, then by Lemma \ref{def4}, it holds the following inequality
\begin{equation*}
    \alpha_{k+1}\langle \nabla f(y_{k+1}), u_{k+1}-x\rangle \leq V(x,u_k) - V(u_{k+1},u_k) - V(x,u_{k+1}).
\end{equation*}
Hence, for $x=u_k$, we have
\begin{equation*}
    \alpha_{k+1}\langle \nabla f(y_{k+1}), u_{k+1} -u_k\rangle \leq - V(u_{k+1}, u_k)-V(u_k,u_{k+1}).
\end{equation*}
Therefore, from \eqref{1111} we get
\begin{equation*}
    f(x_{k+1})\leq f(y_{k+1})+\delta.
\end{equation*}

Since for relatively Lipschitz continuous functions the following inequality also holds
\begin{equation*}
    \langle \nabla f(y), x - y \rangle + L V(x, y) + \delta \geq 0, \quad \forall x, y \in Q.
\end{equation*}
Then
\begin{equation*}
    f(x_{k+1}) \leq f(y_{k+1}) + \delta \leq f(y_{k+1}) + \langle \nabla f(y_{k+1}), x_{k+1} -y_{k+1} \rangle + L V(x_{k+1}, y_{k+1}) + 2 \delta.
\end{equation*}

Thus, the exit criterion from the iteration of the Algorithm \ref{alg:cap} will be guaranteed to be satisfied when $L_{k+1}\geq L$ and $\delta_{k+1}\geq 2\delta$.
\end{remark}

\section{Accelerated Bregman proximal gradient  methods with inexact oracle}\label{sect:auccBPGM}

In this section, to solve the problem \eqref{main_composite_problem}, we propose two accelerated algorithms. The first one is non-adaptive (see Algorithm \ref{alg:cap_1}) and it is an analog of the proposed method APBG in \cite{hanzely2021accelerated}, with an adaptation to the inexact oracle. The second algorithm (see Algorithm \ref{alg:cap_2}) is an adaptive version of Algorithm \ref{alg:cap_1}, where the adaptivity is for the relative smoothness parameter $L$. 

At the first, let us introduce the following notation
\begin{equation*}
    g(x|y) := f_{\delta}(y) + \left\langle\nabla f_{\delta}(y),x-y\right\rangle, \quad \forall x \in Q, \;\; \text{and} \;\; y \in {\rm rint Q}.
\end{equation*}

By taking into account that $f$ is a relatively $L$-smooth function and from \eqref{eq_8} we obtain that
\begin{equation} \label{eq_22}
    g(x|y)\leq f(x) \leq g(x|y) + L V(x,y) + \delta .
\end{equation}

\subsection{Non-adaptive Accelerated Bregman Proximal Gradient Method with Inexact Oracle}

In this subsection, we propose an analog of the non-adaptive accelerated Bregman proximal gradient method (APBG) \cite{hanzely2021accelerated}. By adding the idea of an inexact oracle to APBG we obtain a new algorithm, listed as Algorithm \ref{alg:cap_1}, below. 

\begin{algorithm}[htb]
\caption{Accelerated Bregman Proximal Gradient Method with Inexact Oracle  (non-adaptive version, AccBPGM-1). }\label{alg:cap_1}
\begin{algorithmic}[1]
\STATE\textbf{Input:} initial point $x_0\in {\rm rint} Q$, $\gamma \in (1,2], \delta >0$, and $L > 0$.
\STATE Set $z_0 = x_0$ and $\theta_0 = 1$.
\FOR{$k = 0, 1, 2, \ldots$}
\STATE $y_k = (1 - \theta_k)x_k + \theta_kz_k.$
\STATE $z_{k+1} = \arg\min\limits_{z \in Q} \left\{ g(z|y_k) + \theta_k^{\gamma-1}LV(z,z_k) \right\}.$
\STATE $x_{k+1} = (1 - \theta_k)x_k + \theta_kz_{k+1}.$
\STATE Choose $\theta_{k+1} \in (0, 1]$ such that
\begin{equation}\label{eq_21}
    \frac{1-\theta_{k+1}}{\theta_{k+1}^{\gamma}} \leq \frac{1}{\theta_k^{\gamma}}.
\end{equation}
\ENDFOR
\end{algorithmic}
\end{algorithm}

For Algorithm \ref{alg:cap_1}, we have the following result. 

\begin{theorem}\label{def70}
Let $f : Q \longrightarrow \mathbb{R}$ be an $L$-smooth function relative to a prox-function $d$ on $Q$, and its Bregman divergence satisfies the triangular scaling property with scaling factor $\gamma \in (1,2]$. After $N \geq 1$ iterations of the Algorithm \ref{alg:cap_1}, if $\theta_N \leq \frac{\gamma}{N + \gamma}$, then it holds the following inequality
\begin{equation}\label{ineq:rate_alg2}
    f(x_N) - f(x_*) \leq \Big{(} \frac{\gamma}{\gamma + N-1} \Big{)}^{\gamma}LV(x_*,x_0) + \delta N.
\end{equation}
\end{theorem}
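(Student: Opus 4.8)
The plan is to derive a one-step descent inequality and then telescope it after dividing by $\theta_k^{\gamma}$, mirroring the analysis of the exact APBG method while tracking the oracle error $\delta$ at each step. First I would upper-bound $f(x_{k+1})$ via the relative-smoothness estimate in \eqref{eq_22}: since $x_{k+1}=(1-\theta_k)x_k+\theta_k z_{k+1}$,
\[
    f(x_{k+1})\leq g(x_{k+1}\mid y_k)+L\,V(x_{k+1},y_k)+\delta.
\]
Because $x\mapsto g(x\mid y_k)$ is affine, $g(x_{k+1}\mid y_k)=(1-\theta_k)g(x_k\mid y_k)+\theta_k\,g(z_{k+1}\mid y_k)$. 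For the divergence term I would apply the triangular scaling property \eqref{eq_15} with factor $\gamma$, using that $y_k=(1-\theta_k)x_k+\theta_k z_k$ and $x_{k+1}=(1-\theta_k)x_k+\theta_k z_{k+1}$ share the anchor $x_k$, so $V(x_{k+1},y_k)\leq\theta_k^{\gamma}V(z_{k+1},z_k)$. This gives
\[
    f(x_{k+1})\leq(1-\theta_k)g(x_k\mid y_k)+\theta_k\big(g(z_{k+1}\mid y_k)+\theta_k^{\gamma-1}L\,V(z_{k+1},z_k)\big)+\delta.
\]

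Next I would exploit the optimality of $z_{k+1}$. Applying Lemma \ref{def4} with the convex function $\varphi(z)=\tfrac{1}{\theta_k^{\gamma-1}L}\,g(z\mid y_k)$ and reference point $z_k$, then multiplying by $\theta_k^{\gamma-1}L$, yields for every $x\in Q$
\[
    g(z_{k+1}\mid y_k)+\theta_k^{\gamma-1}L\,V(z_{k+1},z_k)\leq g(x\mid y_k)+\theta_k^{\gamma-1}L\big(V(x,z_k)-V(x,z_{k+1})\big).
\]
Substituting this (noting $\theta_k\cdot\theta_k^{\gamma-1}=\theta_k^{\gamma}$) and then using the lower estimate $g(\cdot\mid y_k)\leq f(\cdot)$ from \eqref{eq_22} for both $g(x_k\mid y_k)\leq f(x_k)$ and $g(x\mid y_k)\leq f(x)$, I obtain the key recursion
\[
    f(x_{k+1})-f(x)\leq(1-\theta_k)\big(f(x_k)-f(x)\big)+\theta_k^{\gamma}L\big(V(x,z_k)-V(x,z_{k+1})\big)+\delta.
\]

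Finally I would divide by $\theta_k^{\gamma}$ and telescope. The defining condition \eqref{eq_21} guarantees $\tfrac{1-\theta_k}{\theta_k^{\gamma}}\leq\tfrac{1}{\theta_{k-1}^{\gamma}}$ for $k\geq1$, while $\theta_0=1$ annihilates the coefficient at $k=0$; summing over $k=0,\dots,N-1$ collapses the function-gap terms and telescopes the divergences, so that after setting $x=x_*$ and using $z_0=x_0$ together with $V(x_*,z_N)\geq0$,
\[
    \frac{f(x_N)-f(x_*)}{\theta_{N-1}^{\gamma}}\leq L\,V(x_*,x_0)+\sum_{k=0}^{N-1}\frac{\delta}{\theta_k^{\gamma}}.
\]
Multiplying by $\theta_{N-1}^{\gamma}$ and invoking the decay hypothesis in the form $\theta_{N-1}\leq\tfrac{\gamma}{\gamma+N-1}$ bounds the first term by $\big(\tfrac{\gamma}{\gamma+N-1}\big)^{\gamma}L\,V(x_*,x_0)$.

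The main obstacle is controlling the accumulated error $\theta_{N-1}^{\gamma}\sum_{k=0}^{N-1}\theta_k^{-\gamma}$, for which I would first establish that $\{\theta_k\}$ is non-increasing. Since $\theta\mapsto\tfrac{1-\theta}{\theta^{\gamma}}$ is strictly decreasing on $(0,1]$, and \eqref{eq_21} taken at equality forces $\tfrac{1-\theta_{k+1}}{\theta_{k+1}^{\gamma}}=\tfrac{1}{\theta_k^{\gamma}}>\tfrac{1-\theta_k}{\theta_k^{\gamma}}$, monotonicity of $\theta_k$ follows. Consequently $\theta_{N-1}\leq\theta_k$ for every $k\leq N-1$, whence $\theta_{N-1}^{\gamma}\theta_k^{-\gamma}\leq1$ and the error sum is bounded by $\delta N$, which combined with the first term gives exactly \eqref{ineq:rate_alg2}.
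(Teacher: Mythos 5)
Your derivation of the one-step inequality and the telescoping step is correct and essentially identical to the paper's own proof: the paper isolates your ``key recursion'' as Lemma \ref{def500} (proved exactly as you do, via \eqref{eq_22}, the convexity of $g(\cdot|y_k)$, the triangular scaling property, and Lemma \ref{def4} applied to $\varphi(x)=\frac{1}{\theta_k^{\gamma-1}L}\,g(x|y_k)$), then sums it at $x=x_*$ using \eqref{eq_21}, $\theta_0=1$, $z_0=x_0$ and $V(x_*,z_N)\ge 0$, arriving at the same intermediate bound $f(x_N)-f(x_*)\le \theta_{N-1}^{\gamma}LV(x_*,x_0)+\sum_{k=0}^{N-1}\frac{\theta_{N-1}^{\gamma}}{\theta_k^{\gamma}}\delta$.

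The gap is in your endgame. First, monotonicity of $\{\theta_k\}$ does not follow from \eqref{eq_21}: that condition is an inequality, and it is satisfied, for instance, by $\theta_{k+1}=1$ for every $k$ (since then $\frac{1-\theta_{k+1}}{\theta_{k+1}^{\gamma}}=0$), so a sequence admissible for Algorithm \ref{alg:cap_1} can fail to be non-increasing; your argument needs \eqref{eq_21} to hold \emph{with equality}, which neither the algorithm nor the theorem imposes. Second, your final bound invokes $\theta_{N-1}\le\frac{\gamma}{\gamma+N-1}$, but the stated hypothesis is $\theta_N\le\frac{\gamma}{N+\gamma}$, and the index-shifted version does not follow from it even under equality in \eqref{eq_21}: for $\gamma=2$ and $\theta_N=\frac{2}{N+2}$, equality forces $\theta_{N-1}^{2}=\frac{\theta_N^{2}}{1-\theta_N}=\frac{4}{N(N+2)}>\frac{4}{(N+1)^{2}}$, i.e.\ $\theta_{N-1}>\frac{2}{N+1}$. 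The paper sidesteps both problems by instantiating the canonical sequence $\theta_k=\frac{\gamma}{k+\gamma}$ for all $k$ (Lemma \ref{def60} guarantees it satisfies \eqref{eq_21}), for which $\theta_{N-1}^{\gamma}=\left(\frac{\gamma}{\gamma+N-1}\right)^{\gamma}$ exactly and $\frac{\theta_{N-1}}{\theta_k}=\frac{k+\gamma}{N-1+\gamma}\le 1$ by direct computation, which yields both terms of \eqref{ineq:rate_alg2}; admittedly the theorem's hypothesis is looser than what its own proof uses, but that is precisely why trying to work only from \eqref{eq_21} and the stated hypothesis, as you do, cannot close the argument. Your proof becomes complete (and coincides with the paper's) once you make this explicit choice of $\theta_k$ rather than extracting monotonicity and the decay bound from \eqref{eq_21} alone.
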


To prove Theorem \ref{def70}, we need the following lemmas.

\begin{lemma}[\cite{hanzely2021accelerated}]\label{def60}
For any $\gamma \in (1,2]$. The sequence $\left\{\theta_k := \frac{\gamma}{k + \gamma}\right\}_{k \geq 0}$ satisfies the condition \eqref{eq_21}.
\end{lemma}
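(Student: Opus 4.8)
The plan is to substitute the explicit sequence $\theta_k = \gamma/(k+\gamma)$ directly into condition \eqref{eq_21} and reduce it to an elementary scalar inequality that can be dispatched by a concavity (Bernoulli) argument. First I would compute each ingredient: since $\theta_{k+1} = \gamma/(k+1+\gamma)$, we have $1 - \theta_{k+1} = (k+1)/(k+1+\gamma)$ and $\theta_{k+1}^{\gamma} = \gamma^{\gamma}/(k+1+\gamma)^{\gamma}$, while $\theta_k^{-\gamma} = (k+\gamma)^{\gamma}/\gamma^{\gamma}$. Substituting these into \eqref{eq_21} and clearing the common factor $\gamma^{-\gamma}$, the condition becomes equivalent to
\begin{equation*}
    (k+1)\,(k+1+\gamma)^{\gamma-1} \leq (k+\gamma)^{\gamma}.
\end{equation*}

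Next I would normalize this inequality by setting $\beta := \gamma - 1 \in (0,1]$ and $s := k+\gamma$, so that $k+1 = s - \beta$ and $k+1+\gamma = s+1$. Dividing both sides by $s^{\gamma} = s \cdot s^{\beta}$, the inequality takes the compact form
\begin{equation*}
    \left(1 - \frac{\beta}{s}\right)\left(1 + \frac{1}{s}\right)^{\beta} \leq 1.
\end{equation*}

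The key step is to bound the power factor. Since $\beta \in (0,1]$, the map $u \mapsto (1+u)^{\beta}$ is concave on $(-1,\infty)$ and therefore lies below its tangent line at $u=0$, which gives $(1+1/s)^{\beta} \leq 1 + \beta/s$ (the standard Bernoulli/concavity inequality applied with $u = 1/s > 0$). Because $s = k+\gamma \geq \gamma > 1$ and $\beta \leq 1$, the factor $1 - \beta/s$ is strictly positive, so multiplying the two one-sided bounds preserves the direction and yields
\begin{equation*}
    \left(1 - \frac{\beta}{s}\right)\left(1 + \frac{1}{s}\right)^{\beta} \leq \left(1 - \frac{\beta}{s}\right)\left(1 + \frac{\beta}{s}\right) = 1 - \frac{\beta^2}{s^2} \leq 1,
\end{equation*}
which is exactly the required bound.

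The main obstacle here is conceptual rather than computational: spotting the substitution $(\beta,s)$ that collapses the awkward mixed-power expression into the Bernoulli-ready form $(1-\beta/s)(1+1/s)^{\beta}$, and then being careful to verify that $1-\beta/s>0$ before multiplying the two bounds together (otherwise the inequality direction could flip). Once that reduction is in place, the conclusion is a one-line application of the concavity of $u\mapsto(1+u)^{\beta}$, valid uniformly for all $k\geq 0$ and all $\gamma\in(1,2]$.
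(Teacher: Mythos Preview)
Your proof is correct. The substitution $(\beta,s)=(\gamma-1,\,k+\gamma)$ cleanly reduces \eqref{eq_21} to $(1-\beta/s)(1+1/s)^{\beta}\le 1$, and the Bernoulli/concavity bound $(1+u)^{\beta}\le 1+\beta u$ for $\beta\in(0,1]$ finishes it; the positivity check $1-\beta/s>0$ (needed to multiply the one-sided bounds) is handled since $s\ge\gamma>1\ge\beta$.

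As for comparison: the paper does not actually prove this lemma. It is quoted verbatim from \cite{hanzely2021accelerated} and used as a black box in the proofs of Theorems~\ref{def70} and~\ref{def10}. So your argument supplies a self-contained elementary proof where the paper offers none; there is no alternative approach in the paper to contrast with.
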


\begin{lemma}\label{def500}
Let $f : Q \longrightarrow \mathbb{R}$ be an $L$-smooth function relative to a prox-function $d$ on $Q$, and its Bregman divergence $V(\cdot, \cdot)$ satisfies the triangular scaling property with scaling factor $\gamma \in (1,2]$. Then, the sequences $\{\theta_k\}_{k \geq 0}, \{x_k\}_{k \geq 0}, \{z_k\}_{k \geq 0}$ generated by Algorithm \ref{alg:cap_1} satisfy the following inequality for any $x \in Q$
\begin{equation}\label{eq_23}
    \frac{1}{\theta_k^{\gamma}}\Big{(} f(x_{k+1}) - f(x) \Big{)} + LV(x,z_{k+1}) \leq \frac{1 - \theta_k}{\theta_k^{\gamma}}\Big{(} f(x_{k}) - f(x) \Big{)} + LV(x,z_k) + \frac{\delta}{\theta_k^{\gamma}}.
\end{equation}
\end{lemma}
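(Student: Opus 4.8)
The plan is to follow the standard estimating-sequence analysis for accelerated methods, carefully tracking how the triangular scaling factor $\gamma$ interacts with the weight $\theta_k^{\gamma-1}$ appearing in the proximal step defining $z_{k+1}$. The two structural ingredients of Algorithm \ref{alg:cap_1} that I would exploit are that $g(\cdot|y_k)$ is affine in its first argument and that $x_{k+1}$ and $y_k$ are built as the \emph{same} convex combination $(1-\theta_k)x_k + \theta_k(\cdot)$ evaluated at $z_{k+1}$ and $z_k$ respectively.

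First I would apply the upper bound in \eqref{eq_22} at the point $x_{k+1}$ with base point $y_k$, giving $f(x_{k+1}) \leq g(x_{k+1}|y_k) + LV(x_{k+1},y_k) + \delta$. Using affinity of $g(\cdot|y_k)$ together with $x_{k+1} = (1-\theta_k)x_k + \theta_k z_{k+1}$, I would split $g(x_{k+1}|y_k) = (1-\theta_k)g(x_k|y_k) + \theta_k g(z_{k+1}|y_k)$. For the divergence term, since $x_{k+1}$ and $y_k$ differ only through $z_{k+1}$ versus $z_k$ inside the identical convex combination, the triangular scaling property \eqref{eq_15} with factor $\gamma$ yields $V(x_{k+1},y_k) \leq \theta_k^{\gamma} V(z_{k+1},z_k)$.

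Next I would control the $z_{k+1}$ update via Lemma \ref{def4}. Rewriting the minimization defining $z_{k+1}$ as $\arg\min_{z\in Q}\{ (\theta_k^{\gamma-1}L)^{-1} g(z|y_k) + V(z,z_k)\}$ and applying Lemma \ref{def4} with the generic comparison point $x$, then multiplying through by $\theta_k^{\gamma-1}L$, gives the three-point inequality $g(z_{k+1}|y_k) + \theta_k^{\gamma-1}L\,V(z_{k+1},z_k) \leq g(x|y_k) + \theta_k^{\gamma-1}L\,V(x,z_k) - \theta_k^{\gamma-1}L\,V(x,z_{k+1})$. The key algebraic observation is that the divergence term coming from the smoothness step factors as $L\theta_k^{\gamma} V(z_{k+1},z_k) = \theta_k\cdot[\theta_k^{\gamma-1}L\,V(z_{k+1},z_k)]$, so multiplying the three-point inequality by $\theta_k$ makes the $V(z_{k+1},z_k)$ contributions cancel exactly. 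This matching of $\theta_k\cdot\theta_k^{\gamma-1} = \theta_k^{\gamma}$ is the crux of the argument, and it is precisely why the coefficient $\theta_k^{\gamma-1}$ was chosen in the prox step of the algorithm.

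Finally, I would use the lower bound $g(\cdot|y_k) \leq f(\cdot)$ from \eqref{eq_22} to replace $g(x_k|y_k)$ by $f(x_k)$ and $g(x|y_k)$ by $f(x)$, collect the convex-combination terms into $(1-\theta_k)\big(f(x_k)-f(x)\big)$, and divide by $\theta_k^{\gamma}$ to arrive at \eqref{eq_23}. The main obstacle is the exponent bookkeeping: one must keep the powers of $\theta_k$ aligned so that the $V(z_{k+1},z_k)$ terms annihilate and the telescoping divergence terms $LV(x,z_k)$ and $-LV(x,z_{k+1})$ survive with the correct \emph{unscaled} coefficient $L$; once the $\theta_k\cdot\theta_k^{\gamma-1}=\theta_k^{\gamma}$ cancellation is in place, the remainder is routine convexity and rearrangement.
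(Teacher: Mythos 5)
Your proposal is correct and follows essentially the same route as the paper's proof: the upper bound from \eqref{eq_22} at $x_{k+1}$, the triangular scaling bound $V(x_{k+1},y_k) \leq \theta_k^{\gamma}V(z_{k+1},z_k)$, the three-point inequality from Lemma \ref{def4} applied to the rescaled prox objective, and the $\theta_k\cdot\theta_k^{\gamma-1}=\theta_k^{\gamma}$ bookkeeping before dividing by $\theta_k^{\gamma}$. The only cosmetic difference is that you invoke affinity of $g(\cdot|y_k)$ (giving an exact split) where the paper cites convexity; both are valid and lead to the same inequality.
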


\begin{proof}
From the right-hand side of \eqref{eq_22}, and from items 4, 6 of Algorithm \ref{alg:cap_1}, we find
\begin{align}
    f(x_{k+1}) & \leq g(x_{k+1}|y_k) + LV(x_{k+1},y_k) + \delta \nonumber 
    \\& = g(x_{k+1}|y_k) + LV((1 - \theta_k)x_k + \theta_kz_{k+1},(1 - \theta_k)x_k + \theta_kz_k) + \delta \nonumber
    \\& \overset{\eqref{eq_15}}{\leq} g(x_{k+1}|y_k) + \theta_k^{\gamma}LV(z_{k+1},z_k) + \delta. \label{eq_24}
\end{align}

Using $x_{k+1} = (1 - \theta_k)x_k + \theta_kz_{k+1}$, and from the convexity of the function $x \longmapsto g(x|y_k) \; \forall x \in Q$, we get
\begin{align}
    f(x_{k+1}) & \leq (1 - \theta_k)g(x_{k}|y_k) + \theta_k g(z_{k+1}|y_k) + \theta_k^{\gamma}LV(z_{k+1},z_k) + \delta \nonumber 
    \\& = (1 - \theta_k)g(x_{k}|y_k) + \theta_k(g(z_{k+1}|y_k)+ \theta_k^{\gamma-1}LV(z_{k+1},z_k)) + \delta. \label{eq_25}
\end{align}

Applying Lemma \ref{def4} for $\varphi(x) = \frac{1}{\theta_k^{\gamma-1}L} g(x|y_k)$, and since 
\[
    z_{k+1} = \arg\min\limits_{z \in Q} \left\{ g(z|y_k) + \theta_k^{\gamma-1}LV(z,z_k) \right\}
\]
(see item 5 of Algorithm \ref{alg:cap_1}) we obtain, for any $x \in Q$, the following inequality
\begin{equation}\label{eq_jgfgk}
    g(z_{k+1}|y_k) + \theta_k^{\gamma-1}LV(z_{k+1},z_k) \leq g(x|y_k) + \theta_k^{\gamma-1}LV(x,z_k) - \theta_k^{\gamma-1}LV(x,z_{k+1}).
\end{equation}
Hence, from \eqref{eq_25} and \eqref{eq_jgfgk}, for any $x \in Q$, we find 
\begin{align*}
    f(x_{k+1}) & \leq (1 - \theta_k)g(x_k|y_k) + \theta_k(g(x|y_k) +\theta_k^{\gamma-1}LV(x,z_k) - \theta_k^{\gamma-1}LV(x,z_{k+1})) + \delta
    \\& = (1 - \theta_k)g(x_k|y_k) + \theta_kg(x|y_k) +\theta_k^{\gamma}(LV(x,z_k) - LV(x,z_{k+1})) + \delta
    \\& \overset{\eqref{eq_22}}{\leq} (1 - \theta_k)f(x_k) + \theta_kf(x) +\theta_k^{\gamma}(LV(x,z_k) - LV(x,z_{k+1})) + \delta.
\end{align*}

Subtracting $f(x)$ from both sides of the last inequality, we obtain
\begin{equation*}
    f(x_{k+1}) - f(x) \leq (1 - \theta_k)(f(x_k) - f(x)) + \theta_k^{\gamma}(LV(x,z_k) - LV(x,z_{k+1})) + \delta.
\end{equation*}

Dividing both sides of the last inequality by $\theta_k^{\gamma}$, we get the desired inequality \eqref{eq_23}. 
\end{proof}

Now, by using the Lemma \ref{def60} and Lemma \ref{def500}, we can prove Theorem \ref{def70}, as follows. 

\begin{proof}[Proof of Theorem \ref{def70}]
By taking the sum of both sides of \eqref{eq_23} for $x=x_*$, over $k$ from $0$ to $N-1$, and taking into account \eqref{eq_21}, we get the following
\begin{align*}
    & \quad \sum_{k=0}^{N-1}\left(\frac{1}{\theta_{k}^{\gamma}}\left( f(x_{k+1}) - f(x_*) \right) + LV(x_*,z_{k+1}) \right) 
    \\& \leq \sum_{k=0}^{N-1}\left( \frac{1 - \theta_k}{\theta_k^{\gamma}}\left( f(x_{k}) - f(x_*) \right)+ LV(x_*,z_k) + \frac{\delta}{\theta_k^{\gamma}}\right)
    \\& =\frac{1 - \theta_0}{\theta_0^{\gamma}}\left( f(x_{0}) - f(x_*) \right)+\sum_{k=1}^{N-1}\left( \frac{1 - \theta_k}{\theta_k^{\gamma}}\left( f(x_{k}) - f(x_*) \right)\right)+ \sum_{k=0}^{N-1}LV(x_*,z_k)
    \\& \quad + \sum_{k=0}^{N-1}\frac{\delta}{\theta_k^{\gamma}}
    \\&\leq \frac{1 - \theta_0}{\theta_0^{\gamma}}\left( f(x_{0}) - f(x_*) \right)+\sum_{k=1}^{N-1}\left( \frac{1}{\theta_{k-1}^{\gamma}}\left( f(x_{k}) - f(x_*) \right)\right)+ \sum_{k=0}^{N-1}LV(x_*,z_k) 
    \\& \quad + \sum_{k=0}^{N-1}\frac{\delta}{\theta_k^{\gamma}}.
\end{align*}

Then, after taking the summation we have
\begin{equation*}
    \frac{1}{\theta_{N-1}^{\gamma}} \left( f(x_N) - f(x_*) \right)\leq \frac{1 - \theta_0}{\theta_0^{\gamma}} \left( f(x_0) - f(x_*) \right) + LV(x_*,z_0) - LV(x_*,z_N) + \sum_{k=0}^{N-1} \frac{\delta}{\theta_k^{\gamma}}.
\end{equation*}

Based on the fact that $V(x_*,z_N) \ge 0$, and also $\frac{1 - \theta_0}{\theta_0^{\gamma}} = 0$, since $\theta_0 = 1$, we have
\begin{equation*}
    \frac{1}{\theta_{N-1}^{\gamma}}\left( f(x_N) - f(x_*) \right)  \leq LV(x_*,z_0) + \sum_{k=0}^{N-1}\frac{\delta}{\theta_k^{\gamma}}.
\end{equation*}

By multiplying both sides of the last inequality by $\theta_{N-1}^{\gamma}$ and taking into account that $z_0 = x_0$, we have
\begin{equation}\label{eq:pohn1}
    f(x_N) - f(x_*)   \leq L\theta_{N-1}^{\gamma} V(x_*,x_0) + \sum_{k=0}^{N-1}\frac{\theta_{N-1}^{\gamma}}{\theta_k^{\gamma}}\delta.
\end{equation}

Now, by taking $\theta_k = \frac{\gamma}{k + \gamma}, \forall k \geq 0$, then from Lemma \ref{def60}, we obtain
\begin{equation*}
    \frac{\theta_{N-1}}{\theta_{k}} = \frac{\gamma}{N-1 + \gamma} \cdot \frac{k + \gamma}{\gamma} = \frac{k+\gamma}{N-1+\gamma}\leq 1,\quad \forall k=0,1,\dots, N-1.
\end{equation*}
Thus,
\begin{equation}\label{eq:14gf52}
    \sum_{k=0}^{N-1}\frac{\theta_{N-1}^{\gamma}}{\theta_k^{\gamma}}\delta = \delta \left( \frac{\theta_{N-1}^{\gamma}}{\theta_0^{\gamma}} + \frac{\theta_{N-1}^{\gamma}}{\theta_1^{\gamma}} + \ldots + \frac{\theta_{N-1}^{\gamma}}{\theta_{N-1}^{\gamma}}  \right) \leq \delta N.
\end{equation}

Therefore, from \eqref{eq:pohn1} and \eqref{eq:14gf52} we get the desired inequality \eqref{ineq:rate_alg2}.
\end{proof}

\subsection{Adaptive Accelerated Proximal Gradient Bregman Method with Inexact Oracle}

In this subsection, we propose an adaptive version of Algorithm \ref{alg:cap_1}, where the adaptivity is for the relative smoothness parameter $L$. This adaptive algorithm is listed as Algorithm \ref{alg:cap_2}.

\begin{algorithm}[htp]
\caption{Accelerated Bregman Proximal Gradient Method with Inexact Oracle (adaptive version, AccBPGM-2).} \label{alg:cap_2}
\begin{algorithmic}[1]
\STATE \textbf{Input:}  initial point $x_0\in {\rm rint} Q$, $\gamma \in (1,2], \delta >0$, and $L_1 >0$ such that $L_1 < 2L$.
\STATE Set $z_0 = x_0$, $\theta_0 = 1$.
\FOR{$k = 0, 1, 2, \ldots$}
\STATE $y_k = (1 - \theta_k) x_k + \theta_k z_k. $
\STATE $z_{k+1} = \arg\min\limits_{z \in Q} \left\{ g(z|y_k) + \theta_k^{\gamma-1}L_{k+1}V(z,z_k)\right\}. $
\STATE $x_{k+1} = (1 - \theta_k)x_k + \theta_kz_{k+1} .$
\STATE Choose $\theta_{k+1} \in (0, 1]$ such that
\begin{equation}\label{eq_26}
    \frac{1-\theta_{k+1}}{\theta_{k+1}^{\gamma}} \leq \frac{1}{\theta_k^{\gamma}}.
\end{equation}
\STATE Find $L_{k+1} \ge  \left(\frac{\theta_{k+1}}{\theta_k}\right)^{\gamma}$ and $L_{k+1}\geq L_k$ such that
\begin{equation*}
    f_{\delta}(x_{k+1}) \leq f_{\delta}(y_k) + \left\langle\nabla f_{\delta}(y_k),x_{k+1}-y_k\right\rangle + L_{k+1}V(x_{k+1},y_k) + \delta.
\end{equation*}
\ENDFOR
\end{algorithmic}
\end{algorithm}

To analyze Algorithm \ref{alg:cap_2} and obtain its convergence rate, at first, we will prove the following lemma.  

\begin{lemma}\label{def8}
Let $f: Q \longrightarrow \mathbb{R}$ be an $L$-smooth function relative to a prox-function $d$ on $Q$, and its Bregman divergence $V(\cdot, \cdot)$ satisfies the triangular scaling property with scaling factor $\gamma \in (1,2]$. Then, the sequences $\{\theta_k\}_{k \geq 0}, \{x_k\}_{k \geq 0}, \{z_k\}_{k \geq 0}$ generated by Algorithm \ref{alg:cap_2} satisfy the following inequality for any $x \in Q$
\begin{equation}\label{eq_27}
    \frac{1}{\theta_k^{\gamma}}\Big{(} f(x_{k+1}) - f(x) \Big{)} + L_{k+1}V(x,z_{k+1}) \leq \frac{1 - \theta_k}{\theta_k^{\gamma}}\Big{(} f(x_{k}) - f(x) \Big{)} + L_{k+1}V(x,z_k) + \frac{\delta}{\theta_k^{\gamma}}.
\end{equation}
\end{lemma}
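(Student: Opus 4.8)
The plan is to reproduce, essentially verbatim, the proof of Lemma \ref{def500}, observing that the only structural difference between Algorithm \ref{alg:cap_1} and Algorithm \ref{alg:cap_2} is that the fixed relative-smoothness constant $L$ is replaced, at each step, by the adaptively chosen $L_{k+1}$, and that the per-step upper bound on $f(x_{k+1})$ now comes from the line-search exit criterion of Algorithm \ref{alg:cap_2} rather than from the right-hand inequality of \eqref{eq_22}. Since \eqref{eq_27} is a single-step inequality, nothing in the argument relies on $L$ being constant across iterations, so the substitution $L \mapsto L_{k+1}$ is harmless and the whole chain goes through unchanged.

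Concretely, I would proceed as follows. First, combine the exit criterion $f_\delta(x_{k+1}) \leq f_\delta(y_k) + \langle \nabla f_\delta(y_k), x_{k+1}-y_k\rangle + L_{k+1}V(x_{k+1},y_k) + \delta$ with the oracle bound $f(x_{k+1}) \leq f_\delta(x_{k+1}) + \delta$ coming from Definition \ref{def1} (with $x=y$) to obtain an estimate of the form $f(x_{k+1}) \leq g(x_{k+1}|y_k) + L_{k+1}V(x_{k+1},y_k) + \delta$, keeping careful track of the accumulated oracle error. Second, insert $x_{k+1} = (1-\theta_k)x_k + \theta_k z_{k+1}$ and $y_k = (1-\theta_k)x_k + \theta_k z_k$ and apply the triangular scaling property \eqref{eq_15} to get $V(x_{k+1},y_k) \leq \theta_k^{\gamma}V(z_{k+1},z_k)$. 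Third, use the affinity (hence convexity) of the map $x \mapsto g(x|y_k)$ to split $g(x_{k+1}|y_k) \leq (1-\theta_k)g(x_k|y_k) + \theta_k g(z_{k+1}|y_k)$, regrouping the term $\theta_k^{\gamma}L_{k+1}V(z_{k+1},z_k)$ as $\theta_k \cdot \theta_k^{\gamma-1}L_{k+1}V(z_{k+1},z_k)$.

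Fourth --- the step carrying the real content --- I would apply Lemma \ref{def4} to the proximal subproblem defining $z_{k+1}$, taking $\varphi(x) = \frac{1}{\theta_k^{\gamma-1}L_{k+1}}g(x|y_k)$ and $z = z_k$, which yields for every $x \in Q$ the three-point inequality $g(z_{k+1}|y_k) + \theta_k^{\gamma-1}L_{k+1}V(z_{k+1},z_k) \leq g(x|y_k) + \theta_k^{\gamma-1}L_{k+1}\big(V(x,z_k) - V(x,z_{k+1})\big)$. Substituting this back, and then applying the left inequality $g(\cdot|y_k) \leq f(\cdot)$ of \eqref{eq_22} at both $x_k$ and $x$, converts the right-hand side into $(1-\theta_k)f(x_k) + \theta_k f(x) + \theta_k^{\gamma}L_{k+1}\big(V(x,z_k)-V(x,z_{k+1})\big) + \delta$. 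Finally, subtracting $f(x)$, dividing by $\theta_k^{\gamma}$, and moving the $L_{k+1}V(x,z_{k+1})$ term to the left-hand side produces exactly \eqref{eq_27}.

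I expect no serious obstacle, since the argument is a faithful transcription of the proof of Lemma \ref{def500} with $L \to L_{k+1}$. The only point requiring genuine attention is the inexact-oracle bookkeeping in the first step: the exit criterion is phrased in terms of $f_\delta(x_{k+1})$ rather than $f(x_{k+1})$, so one must verify that the resulting error collapses into the single $\frac{\delta}{\theta_k^{\gamma}}$ appearing in \eqref{eq_27}, absorbing if necessary the numerical constant in front of $\delta$ into the chosen oracle accuracy. It is also worth checking that $\varphi$ in Step 4 is well-defined and convex, which holds because $L_{k+1} > 0$ is fixed throughout iteration $k$ and $g(\cdot|y_k)$ is affine.
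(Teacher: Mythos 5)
Your proposal is correct and follows essentially the same route as the paper's proof: the per-step upper bound on $f(x_{k+1})$, the triangular scaling property to get $\theta_k^{\gamma}V(z_{k+1},z_k)$, convexity of $g(\cdot|y_k)$, the three-point inequality from Lemma \ref{def4} with $\varphi(x) = \frac{1}{\theta_k^{\gamma-1}L_{k+1}}\,g(x|y_k)$, the left inequality of \eqref{eq_22} at $x_k$ and $x$, and finally subtracting $f(x)$ and dividing by $\theta_k^{\gamma}$. The only divergence is your first step, and the bookkeeping concern you flag is real: the paper simply invokes the right-hand side of \eqref{eq_22} with $L_{k+1}$ in place of $L$ to write $f(x_{k+1}) \leq g(x_{k+1}|y_k) + L_{k+1}V(x_{k+1},y_k)+\delta$, whereas your derivation via the line-search exit criterion plus the oracle bound $f(x_{k+1}) \leq f_{\delta}(x_{k+1})+\delta$ produces $2\delta$ --- and since Algorithm \ref{alg:cap_2} does not guarantee $L_{k+1}\geq L$, your route is arguably the more rigorous one, at the price of a constant in front of $\delta$ that must be absorbed exactly as you describe.
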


\begin{proof} 
From the right-hand side of \eqref{eq_22}, and from items 4, 6 of Algorithm \ref{alg:cap_2}, we find
\begin{align}
    f(x_{k+1}) & \leq g(x_{k+1}|y_k) + L_{k+1}V(x_{k+1},y_k) + \delta \nonumber 
    \\& = g(x_{k+1}|y_k) + L_{k+1}V((1 - \theta_k)x_k + \theta_kz_{k+1},(1 - \theta_k)x_k + \theta_kz_k) + \delta \nonumber
    \\& \overset{\eqref{eq_15}}{\leq} g(x_{k+1}|y_k) + \theta_k^{\gamma} L_{k+1} V(z_{k+1},z_k) + \delta. \label{eq_28}
\end{align}

Using $x_{k+1} = (1 - \theta_k)x_k + \theta_kz_{k+1}$, and from the convexity of the function $ x \longmapsto g( x |y_k)\; \forall x \in Q$, we get
\begin{align}
    f(x_{k+1}) & \leq (1 - \theta_k)g(x_{k}|y_k) + \theta_k g(z_{k+1}|y_k) + \theta_k^{\gamma} L_{k+1} V(z_{k+1},z_k) + \delta \nonumber 
    \\& = (1 - \theta_k)g(x_{k}|y_k) + \theta_k(g(z_{k+1}|y_k)+ \theta_k^{\gamma-1} L_{k+1} V(z_{k+1},z_k)) + \delta. \label{eq_29}
\end{align}

Applying Lemma \ref{def4} for 
 $\varphi(x) = \frac{1}{\theta_k^{\gamma-1}L_{k+1}} g(x|y_k)$, and since 
\[
    z_{k+1} = \arg\min\limits_{z \in Q} \left\{ g(z|y_k) + \theta_k^{\gamma-1} L_{k+1} V(z,z_k) \right\}
\]
(see item 5 of Algorithm \ref{alg:cap_2}) we obtain, for any $x \in Q$, the following inequality
\begin{equation}\label{eq_jgfgkfd}
    g(z_{k+1}|y_k) + \theta_k^{\gamma-1}L_{k+1} V(z_{k+1},z_k) \leq g(x|y_k) + \theta_k^{\gamma-1} L_{k+1} V(x,z_k) - \theta_k^{\gamma-1} L_{k+1} V(x,z_{k+1}).
\end{equation}
Hence, from \eqref{eq_29} and \eqref{eq_jgfgkfd}, for any $x \in Q$, we find 
\begin{align*}
    f(x_{k+1}) & \leq (1 - \theta_k) g(x_k|y_k) + \theta_k \left( g(x|y_k) +\theta_k^{\gamma-1}L_{k+1}V(x,z_k) - \theta_k^{\gamma-1}L_{k+1}V(x,z_{k+1}) \right) 
    \\& \quad + \delta
    \\& = (1 - \theta_k)g(x_k|y_k) + \theta_k g(x|y_k) +\theta_k^{\gamma} \left(L_{k+1}V(x,z_k) - L_{k+1}V(x,z_{k+1})\right) + \delta 
    \\& \overset{\eqref{eq_22} }{\leq} (1 - \theta_k)f(x_k) + \theta_kf(x) +\theta_k^{\gamma} \left(L_{k+1}V(x,z_k) - L_{k+1}V(x,z_{k+1}) \right) + \delta.
\end{align*}

Subtracting $f(x)$ from both sides of the last inequality, we obtain
\begin{equation*}
    f(x_{k+1}) - f(x) \leq (1 - \theta_k)(f(x_k) - f(x)) + \theta_k^{\gamma}(L_{k+1}V(x,z_k) - L_{k+1}V(x,z_{k+1})) + \delta.
\end{equation*}

Dividing both sides of the last inequality by $\theta_k^{\gamma}$, we get the desired inequality \eqref{eq_27}. 
\end{proof}

For Algorithm \ref{alg:cap_2}, we have the following result. 

\begin{theorem}\label{def10}
Let $f: Q \longrightarrow \mathbb{R}$ be an $L$-smooth function relative to a prox-function $d$ on $Q$, and the Bregman divergence $V(\cdot, \cdot)$ satisfies the triangular scaling property with scaling factor $\gamma \in (1,2]$. After $N \geq 1$ iterations of the Algorithm \ref{alg:cap_2}, if $\theta_N \leq \frac{\gamma}{N + \gamma}$, then it holds the following inequality
\begin{equation}\label{ineq_rate_alg3}
    f(x_N) - f(x_*) \leq 2L\left( \frac{\gamma}{\gamma + N-1} \right)^{\gamma}V(x_*,x_0) + \left(2(N-1)L+1\right)\delta.
\end{equation}
\end{theorem}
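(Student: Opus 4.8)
The plan is to mirror the proof of Theorem \ref{def70}, starting from the one-step estimate \eqref{eq_27} of Lemma \ref{def8}, but with one extra device needed to cope with the fact that the relative-smoothness estimate $L_{k+1}$ now changes from iteration to iteration. First I would divide both sides of \eqref{eq_27} by $L_{k+1}$, which turns the Bregman terms into $V(x,z_{k+1})$ and $V(x,z_k)$ with coefficient $1$. This is the key step, because only after this normalization do the divergence terms telescope cleanly across iterations: in the undivided form the terms $L_{k+1}V(x,z_{k+1})$ (from step $k$) and $L_{k+2}V(x,z_{k+1})$ (from step $k+1$) would leave a stray nonnegative residual $\sum(L_{k+2}-L_{k+1})V(x_*,z_{k+1})$ that cannot be controlled. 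I then sum the normalized inequality for $x=x_*$ over $k=0,\dots,N-1$, using the choice $\theta_k=\frac{\gamma}{k+\gamma}$ (Lemma \ref{def60}).

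After summing, the divergence terms telescope to $V(x_*,z_0)-V(x_*,z_N)$, and I would drop $V(x_*,z_N)\ge 0$ and use $z_0=x_0$. For the function-value terms I would collect the coefficient of each $a_j:=f(x_j)-f(x_*)$ and verify that $\frac{1-\theta_j}{\theta_j^{\gamma}L_{j+1}}\le\frac{1}{\theta_{j-1}^{\gamma}L_{j}}$; this follows by multiplying the step-size condition \eqref{eq_26} (in the form $\frac{1-\theta_j}{\theta_j^{\gamma}}\le\frac{1}{\theta_{j-1}^{\gamma}}$) by the monotonicity $L_{j+1}\ge L_j$, both factors being nonnegative. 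Since $a_j\ge 0$ and the $k=0$ summand vanishes because $\theta_0=1$, the only surviving function-value term is $\frac{1}{\theta_{N-1}^{\gamma}L_N}a_N$, which gives
\begin{equation*}
    \frac{1}{\theta_{N-1}^{\gamma}L_N}\big(f(x_N)-f(x_*)\big)\le V(x_*,x_0)+\sum_{k=0}^{N-1}\frac{\delta}{\theta_k^{\gamma}L_{k+1}}.
\end{equation*}

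Finally I would multiply through by $\theta_{N-1}^{\gamma}L_N$. For the first term, the backtracking guarantees $L_N\le 2L$ (the descent test in Algorithm \ref{alg:cap_2} holds as soon as $L_{k+1}\ge L$, so the doubling overshoots by at most a factor $2$, exactly as in Lemma \ref{lem}), and $\theta_{N-1}=\frac{\gamma}{N-1+\gamma}$ produces the coefficient $2L\big(\frac{\gamma}{\gamma+N-1}\big)^{\gamma}$. For the error term I would split off the $k=N-1$ summand, whose contribution is $\theta_{N-1}^{\gamma}L_N\cdot\frac{\delta}{\theta_{N-1}^{\gamma}L_N}=\delta$ (this is the ``$+1$''); for each remaining index $k\le N-2$ I would invoke the algorithmic constraint $L_{k+1}\ge(\theta_{k+1}/\theta_k)^{\gamma}$ to obtain $\frac{\theta_{N-1}^{\gamma}}{\theta_k^{\gamma}L_{k+1}}\le\big(\frac{\theta_{N-1}}{\theta_{k+1}}\big)^{\gamma}\le 1$, so these $N-1$ terms sum to at most $N-1$; multiplying by $L_N\le 2L$ gives $2(N-1)L\delta$. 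Adding the two pieces yields the claimed error $(2(N-1)L+1)\delta$.

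The main obstacle is precisely the non-constant $L_{k+1}$, which destroys the clean telescoping that made the non-adaptive argument of Theorem \ref{def70} go through. The resolution is twofold: normalizing by $L_{k+1}$ so that the Bregman divergences telescope, and then exploiting the two structural properties built into Algorithm \ref{alg:cap_2} — the monotonicity $L_{k+1}\ge L_k$ for the function-value telescoping, and the constraint $L_{k+1}\ge(\theta_{k+1}/\theta_k)^{\gamma}$ for the error estimate. This is also where the extra factor $2$ in the leading term and the enlarged error $(2(N-1)L+1)\delta$, compared with Theorem \ref{def70}, originate.
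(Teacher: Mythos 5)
Your proposal is correct and follows essentially the same route as the paper's proof: dividing the one-step inequality \eqref{eq_27} by $L_{k+1}$, telescoping the function-value terms via \eqref{eq_26} combined with the monotonicity $L_{k+1}\ge L_k$, arriving at the same intermediate bound, and then estimating the error sum through the algorithmic constraint $L_{k+1}\ge(\theta_{k+1}/\theta_k)^{\gamma}$ with the $k=N-1$ term contributing the ``$+1$'' and the remaining $N-1$ terms bounded by $L_N\le 2L$ each. The only cosmetic difference is the justification of $L_N<2L$: the paper cites the assumption $L_1<2L$ together with \eqref{eq_22}, while you phrase it as the doubling line search overshooting the true constant by at most a factor of two --- the same underlying fact.
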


\begin{proof} 
By taking the sum of both sides of \eqref{eq_27} for $x=x_*$, over $k$ from $0$ to $N-1$ (having previously divided both parts by $L_{k+1}$), and taking into account \eqref{eq_26} and condition $L_{k+1}\geq L_k$, we get the following inequality
\begin{align*}
    & \quad \sum_{k=0}^{N-1}\left(\frac{1}{\theta_{k}^{\gamma}L_{k+1}}\left( f(x_{k+1}) - f(x_*) \right) + V(x_*,z_{k+1}) \right) 
    \\& \leq \sum_{k=0}^{N-1}\left( \frac{1 - \theta_k}{\theta_k^{\gamma}L_{k+1}}\left( f(x_{k}) - f(x_*) \right)+ V(x_*,z_k) + \frac{\delta}{\theta_k^{\gamma}L_{k+1}}\right)
    \\& =\frac{1 - \theta_0}{\theta_0^{\gamma}L_1}\left( f(x_{0}) - f(x_*) \right)+\sum_{k=1}^{N-1}\left( \frac{1 - \theta_k}{\theta_k^{\gamma}L_{k+1}}\left( f(x_{k}) - f(x_*) \right)\right)+ \sum_{k=0}^{N-1}V(x_*,z_k) 
    \\& \quad + \sum_{k=0}^{N-1}\frac{\delta}{\theta_k^{\gamma}L_{k+1}}
    \\&\leq \frac{1 - \theta_0}{\theta_0^{\gamma}L_1}\left( f(x_{0}) - f(x_*) \right)+\sum_{k=1}^{N-1}\left( \frac{1}{\theta_{k-1}^{\gamma}L_k}\left( f(x_{k}) - f(x_*) \right)\right)+ \sum_{k=0}^{N-1}V(x_*,z_k)
    \\& \quad + \sum_{k=0}^{N-1}\frac{\delta}{\theta_k^{\gamma}L_{k+1}}.
\end{align*}


Then, after taking the summation we have
\begin{align*}
    \frac{1}{\theta_{N-1}^{\gamma}L_N} \left( f(x_N) - f(x_*) \right) & \leq \frac{1 - \theta_0}{\theta_0^{\gamma}L_1} \left( f(x_0) - f(x_*) \right) + V(x_*,z_0)-V(x_*,z_N)  \\& \quad + \sum_{k=0}^{N-1} \frac{\delta}{\theta_k^{\gamma}L_{k+1}}.
\end{align*}

Based on the fact that $V(x_*,z_N) \ge 0,$, and also $\frac{1 - \theta_0}{\theta_0^{\gamma}} = 0$, since $\theta_0 = 1$, we have
\begin{equation*}
    \frac{1}{\theta_{N-1}^{\gamma}L_N}\left( f(x_N) - f(x_*) \right) \leq V(x_*,z_0) + \sum_{k=0}^{N-1}\frac{\delta}{\theta_k^{\gamma}L_{k+1}}.
\end{equation*}

By multiplying  both sides of the last inequality by $\theta_{N-1}^{\gamma}L_N$ and taking into account that $z_0 = x_0$, we have
\begin{equation}\label{eq:pol1240}
    f(x_N) - f(x_*)   \leq L_N\theta_{N-1}^{\gamma} V(x_*,z_0) + \sum_{k=0}^{N-1}\frac{\theta_{N-1}^{\gamma}L_N}{\theta_k^{\gamma}L_{k+1}}\delta.
\end{equation}


From Lemma \ref{def60} and the condition on $L_{k+1}$ for any $k=0,1,\dots,N-2$, we have
\begin{equation*}
    \frac{\theta_{N-1}^{\gamma}L_N}{\theta_{k}^{\gamma}L_{k+1}} \leq \frac{\theta_{N-1}^{\gamma}L_N}{\theta_{k+1}^{\gamma}} = \left( \frac{\gamma}{N-1 + \gamma} \right)^{\gamma} \cdot \left(\frac{k + 1 + \gamma}{\gamma}\right)^{\gamma}L_N = \left(\frac{k +1+\gamma}{N-1+\gamma} \right)^{\gamma}L_N \leq L_N. 
\end{equation*}

For $k=N-1$ we have $\frac{\theta_{N-1}^{\gamma}L_N}{\theta_{N-1}^{\gamma}L_{N}}=1.$
Thus,
\begin{equation}\label{eq:ojg102}
    \sum_{k=0}^{N-1}\frac{\theta_{N-1}^{\gamma}L_N}{\theta_k^{\gamma}L_{k+1}}\delta = \delta \left( \frac{\theta_{N-1}^{\gamma}L_N}{\theta_0^{\gamma}L_{1}} + \frac{\theta_{N-1}^{\gamma}L_N}{\theta_1^{\gamma}L_2} + ... + \frac{\theta_{N-1}^{\gamma}L_N}{\theta_{N-1}^{\gamma}L_{N}} \right) \leq \left((N-1)L_N+1\right)\delta.
\end{equation}

From \eqref{eq:pol1240} and \eqref{eq:ojg102}, we get the following inequality
\begin{equation*}
    f(x_N) - f(x_*) \leq \left( \frac{\gamma}{\gamma + N-1} \right)^{\gamma} L_N V(x_*,x_0) + \left((N-1)L_N+1\right)\delta.
\end{equation*}

In view of the assumption $L_1 < 2L$ and inequality \eqref{eq_22}, we obtain $L_{N} < 2L$, and the following inequality
\begin{equation*}
    f(x_N) - f(x_*) \leq 2L\left( \frac{\gamma}{\gamma + N-1} \right)^{\gamma}V(x_*,x_0) + \left(2(N-1)L+1\right)\delta,
\end{equation*}
which is the desired inequality \eqref{ineq_rate_alg3}.
\end{proof} 

Therefore, as a result, we have proposed an approach that allows tuning to the parameter $L_{k+1}$. It is expected that the number of calls to point 5 of Algorithm \ref{alg:cap_2} may increase. Given the assumption $L_1 < 2L$ and Remark 1 in \cite{zam}, we find that such an increase is not critical.

\begin{remark}
Now, for problem \eqref{main_composite_problem}, we will show that the Algorithm \ref{alg:cap_2} is universal, namely, we will prove that this method is applicable not only to relatively smooth but also to relatively Lipschitz continuous optimization problems. 

In \cite{adapt_alg} it is shown that if the function $f$ is relatively Lipschitz continuous, then for any fixed $L>0$ and $\delta>0$ the following inequality holds
\begin{equation*}
    f(x)\leq f(y)+\langle \nabla f(y), x - y\rangle + L V(x,y) + L V(y,x) + \delta,  \quad \forall x,y\in Q.
\end{equation*}
Therefore, for $L_{k+1}\geq L$, taking into account the triangular scaling property for Bregman divergence, we obtain
\begin{align}
    f(x_{k+1}) & \leq f(y_{k})+\langle \nabla f(y_{k}), x_{k+1}-y_{k}\rangle + L_{k+1} V(x_{k+1},y_{k}) +L_{k+1} V(y_{k},x_{k+1})+\delta \nonumber
    \\& = f(y_{k})+\left\langle \nabla f(y_{k}),(1-\theta_k)x_k + \theta_k z_{k+1} - (1-\theta_k)x_k-\theta_k z_{k}\right\rangle  \nonumber
    \\& \quad + L_{k+1} V \left((1-\theta_k)x_k + \theta_k z_{k+1}, (1-\theta_k) x_k + \theta_k z_{k} \right)  \nonumber
    \\& \quad + L_{k+1} V \left((1-\theta_k)x_k + \theta_k z_{k},(1-\theta_k)x_k+\theta_k z_{k+1} \right) + \delta \nonumber
    \\& \leq f(y_{k})+\theta_k\langle \nabla f(y_{k}), z_{k+1}-z_{k}\rangle + L_{k+1}\theta^{\gamma}_k V\left(z_{k+1},z_{k}\right)+L_{k+1}\theta^{\gamma}_k V\left(z_{k},z_{k+1}\right) \nonumber
    \\& \quad + \delta \nonumber
    \\& = f(y_{k}) +\theta_k \Bigg( \langle \nabla f(y_{k}), z_{k+1}-z_{k} \rangle + L_{k+1} \theta^{\gamma-1}_k V(z_{k+1}, z_k)  \nonumber
     \\& \qquad \qquad \qquad \quad + L_{k+1} \theta^{\gamma-1}_kV(z_k, z_{k+1})\Bigg) + \delta. \label{fdsd54d}
\end{align}

Let us consider as an $(\delta, L)$-oracle of the function $f$ the pair $(f_\delta (y), \nabla f_\delta (y)) = (f(y), \nabla f(y))$. Since 
\begin{equation*}
    z_{k+1}:=\arg\min\limits_{x\in Q} \{f(y_k) + \langle \nabla f(y_{k}), x - y_{k}\rangle + \theta^{\gamma -1}_k L_{k+1}V(x,z_k)\}, 
\end{equation*}
then from Lemma \ref{def4} it holds the following inequality 
\begin{equation*}
    \langle \nabla f(y_{k}), z_{k+1}-x\rangle \leq L_{k+1}\theta^{\gamma-1}_kV(x,z_k)-L_{k+1}\theta^{\gamma-1}_kV(z_{k+1},z_k)-L_{k+1}\theta^{\gamma-1}_kV(x,z_{k+1}).
\end{equation*}

Thus, for $x=z_k$ we have
\begin{equation*}
    \langle \nabla f(y_{k}), z_{k+1}-z_k\rangle \leq -L_{k+1}\theta^{\gamma-1}_kV(z_{k+1},z_k)-L_{k+1}\theta^{\gamma-1}_kV(z_k,z_{k+1}).
\end{equation*}
Therefore, from \eqref{fdsd54d}, we have
\begin{equation*}
    f(x_{k+1})\leq f(y_{k})+\delta.
\end{equation*}

Since for relatively Lipschitz continuous functions the following inequality also holds
\begin{equation*}
    \langle \nabla f(y), x-y\rangle + LV(x,y)+\delta\geq 0,  \quad \forall x,y\in Q.
\end{equation*}
Then
\begin{equation*}
    f(x_{k+1})\leq f(y_{k}) + \delta\leq f(y_{k}) +\langle \nabla f(y_{k}), x_{k+1}- y_{k} \rangle + L V(x_{k+1}, y_{k}) + 2 \delta.
\end{equation*}

Thus, the exit criterion from the iteration of the Algorithm \ref{alg:cap_2} will be guaranteed to be satisfied when $L_{k+1}\geq L$ and $\delta_{k+1}\geq 2 \delta$. 
\end{remark}

\section{Adaptive intermediate Bregman method for relatively smooth functions}\label{sec:AIBM}

In this section, we consider the following problem
\begin{equation}\label{main_smooth_problem}
    \min_{x \in Q} f(x), 
\end{equation}
where $f: Q \longrightarrow \mathbb{R}$ is a relatively $L$-smooth function (for some $L > 0$) (see \eqref{eq_6}). 

We assume that the function $f$ is equipped with a first-order $(\delta, L)$-oracle (see \cite{stonyakin2021inexact}), i.e.,  we can compute the pair $(f_{\delta}(y), \nabla_{\delta}f(y)) \in \mathbb{R}\times \mathbb{E}^*$, for any $y \in Q$,  such that
\begin{equation}\label{oracle_smooth}
    0 \leq f(x) - \underbrace{\left(f_{\delta} (y) + \langle \nabla_{\delta} f(y), x - y  \rangle\right)}_{ : = l_{\delta}(y,x)} \leq L V(y, x) + \delta, \quad \forall x \in Q. 
\end{equation}

For problem \eqref{main_smooth_problem}, based on Algorithm 1 from \cite{kamzolov2021universal}, we consider an adaptive algorithm called Adaptive Intermediate Bregman Method (AIBM), which is listed as Algorithm \ref{alg:AIBM} below. The adaptation in this algorithm is performed for the relative smoothness parameter $L$. The idea behind the procedure consists in adaptively constructed searching for a constant $L_k$, that satisfies \eqref{oracle_smooth} for a varying time sequence of inexactness $\{\delta_k\}_{k \geq 0}$. For the relatively $L$-smooth functions, the search for $L_k$ halts in finite time when $\gamma \geq 1$. 

Note that, for the considered class of functions in this section, it was proved that the largest value of $\gamma$, in \eqref{eq_15}, cannot exceed $2$ \cite{hanzely2021accelerated}.  Thus, we will assume that $\gamma \in (1, 2]$ in AIBM to be applicable to solve problem \eqref{main_smooth_problem}.

Hereinafter we assume that for the sequences $\{\alpha_k\}_{k \geq 0}, \{A_k\}_{k \geq 0}, \{B_k\}_{k \geq 0}$, generated by Algorithm \ref{alg:AIBM}, it holds the following inequalities
\begin{equation} \label{eq:order}
    0 < \alpha_k \leqslant B_k \leqslant A_k.
\end{equation}

Let us set 
\begin{equation}\label{Psi_smooth_case}
    \Psi_k(x) : = d(x) + \sum_{i = 0}^{k} \alpha_i l_{\delta_i}(x_i, x), \quad \forall k \geq 0.  
\end{equation}

For this function, we mention the following lemma which provides a lower bound for it. 

\begin{lemma}[Lemma 5.5.1, \cite{ben2001lectures}]
If $x = \arg \min_{x \in Q} \Psi_k(x)\; \forall k \geq 0$, then the following inequality holds
\begin{equation} \label{ineq:psi_smooth}
    \Psi_k(y) \geq \Psi_k(x) + V(y, x), \quad \forall y \in Q.
\end{equation}
\end{lemma}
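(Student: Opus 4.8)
The plan is to exploit the fact that $\Psi_k$ differs from the prox-function $d$ only by an affine term, so that it generates exactly the same Bregman divergence as $d$; the claim then reduces to the standard first-order optimality characterization of the minimizer of a strongly convex function over a convex set. First I would observe that each summand $l_{\delta_i}(x_i, x) = f_{\delta_i}(x_i) + \langle \nabla_{\delta} f(x_i), x - x_i \rangle$ is affine in $x$. Hence $\Psi_k(x) = d(x) + \langle c_k, x \rangle + b_k$, where $c_k = \sum_{i=0}^{k} \alpha_i \nabla_{\delta} f(x_i)$ and $b_k$ is a constant independent of $x$. In particular $\Psi_k$ is continuously differentiable with $\nabla \Psi_k(x) = \nabla d(x) + c_k$, and since $d$ is $1$-strongly convex while the added term is affine, $\Psi_k$ is itself $1$-strongly convex; this guarantees that its minimizer $x$ over the closed convex set $Q$ exists and is unique.

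Next comes the crucial computation. Writing out the Bregman divergence generated by $\Psi_k$, namely $\Psi_k(y) - \Psi_k(x) - \langle \nabla \Psi_k(x), y - x \rangle$, the affine contributions $\langle c_k, \cdot \rangle$ cancel exactly, so that the expression collapses to $d(y) - d(x) - \langle \nabla d(x), y - x \rangle = V(y,x)$ by Definition \ref{def_con}. Equivalently, for every $y \in Q$ I would record the exact identity
\[
    \Psi_k(y) = \Psi_k(x) + \langle \nabla \Psi_k(x), y - x \rangle + V(y,x).
\]

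Finally I would invoke the first-order optimality condition for the constrained minimum: since $x = \arg\min_{x \in Q} \Psi_k(x)$ and $Q$ is convex, the variational inequality $\langle \nabla \Psi_k(x), y - x \rangle \geq 0$ holds for every $y \in Q$. Substituting this bound into the identity above immediately yields $\Psi_k(y) \geq \Psi_k(x) + V(y,x)$, which is the desired inequality \eqref{ineq:psi_smooth}. The only points deserving care — and the places where I expect the ``obstacle'' to lie, though it is a mild one — are the justification that the affine perturbation leaves the Bregman divergence unchanged and the correct form of the variational inequality for a constrained minimizer. Both are elementary consequences of the structure of $\Psi_k$, so the result is essentially a restatement of the prox-inequality for $d$ after an affine shift, and no genuine difficulty arises.
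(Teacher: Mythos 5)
Your proof is correct. Note that the paper does not actually prove this lemma---it imports it by citation from \cite{ben2001lectures}---so there is no in-paper argument to compare against; what you give is precisely the standard proof: since each $l_{\delta_i}(x_i,\cdot)$ is affine, $\Psi_k$ and $d$ generate the same Bregman divergence, so the exact identity $\Psi_k(y)=\Psi_k(x)+\langle \nabla \Psi_k(x), y-x\rangle + V(y,x)$ combined with the variational inequality $\langle \nabla \Psi_k(x), y-x\rangle \geq 0$ at the constrained minimizer yields \eqref{ineq:psi_smooth}. It is worth observing that the same conclusion can also be obtained from Lemma \ref{def4}, which the paper does state: for any fixed $z\in Q$ one has $d(x)=V(x,z)+d(z)+\langle \nabla d(z),x-z\rangle$, so $\Psi_k(x)=\varphi(x)+V(x,z)+\mathrm{const}$ with $\varphi$ affine (hence convex), and applying Lemma \ref{def4} with the roles of the two points swapped gives exactly \eqref{ineq:psi_smooth}; your direct argument is equivalent but more self-contained, while the route through Lemma \ref{def4} would let the paper avoid citing an external source. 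The only hypotheses your argument uses---differentiability and $1$-strong convexity of $d$ on $Q$, which guarantee both the identity and the existence/uniqueness of the minimizer---are exactly those the paper assumes for the prox-function, so there is no gap.
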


\begin{algorithm}[htp]
\caption{Adaptive Intermediate Bregman Method (AIBM). }\label{alg:AIBM}
\begin{algorithmic}[1]
    \STATE \textbf{Input:} positive sequence $\{\delta_k\}_{k \geq 0}$, prox-function $d$, iterations number $N$, $L_0 > 0$, intermediate parameter $p \in [1, 2]$, $\gamma \in (1, 2]$. 
    \STATE $x_0 = z_0 = \arg \min_{x \in Q} d(x),$ and set $ L_0 = L_0 / 2.$  
    \WHILE{\label{1line:sc1} {$f_{\delta_0} (y_0)  > l_{\delta_0} (x_0, y_0) + L_0 V(y_0, x_0) + \delta_0$}}
    \STATE $L_0 = 2  L_0$, $\alpha_0 = 1/L_0,$ \label{1line:alpha1} 
    \STATE $y_0 = \arg \min_{x \in Q}\Psi_0(x).$ \label{1line:yo} 
    \ENDWHILE
    \STATE Set $B_0 = A_0 = \alpha_0 = 1/ L_0.$
    \FOR{$k = 1,2, \ldots, N$}
        \STATE $L_k = L_{k-1} / 2.$
        \WHILE{\label{1line:sc2} {$f_{\delta_k} (w_k) > l_{\delta_k} (x_k, w_k) + L_k V(w_k, x_k) + \delta_k$}}
            \STATE $L_k = 2  L_k$, $\alpha_k = \frac{1}{L_k} \left(1 + \frac{k}{2p}\right)^{(p-1)(\gamma-1)},$ \label{1line:alphas} 
            \STATE $B_k = \left(L_k \alpha_k^\gamma\right)^{1/(\gamma - 1)},$  \label{1code:B}
            \STATE $x_k = \frac{\alpha_k}{B_k} z_{k-1} + (1 - \frac{\alpha_k}{B_k}) y_{k-1},$ \label{1line:x} 
            \STATE $z_k = \arg\min_{x \in Q} \Psi_k(x),$ \label{1line:z} 
            \STATE $w_k = \frac{\alpha_k}{B_k} z_k + (1 - \frac{\alpha_k}{B_k}) y_{k-1},$ \label{1line:w}
            \ENDWHILE
        \STATE $A_k = A_{k-1} + \alpha_k,$ \label{1line:A} 
        \STATE $y_k = \frac{B_k}{A_k} w_k + (1 - \frac{B_k}{A_k}) y_{k-1}.$ \label{1line:y} 
    \ENDFOR
\end{algorithmic}
\end{algorithm}

For proving the convergence rate of the proposed Algorithm \ref{alg:AIBM} (AIBM), let us first prove the following useful lemma. 

\begin{lemma}\label{lemma_upper_bound_Ak_fk}
Let $f$ be a convex and relatively $L$-smooth function, equipped with a first-order $(\delta, L)$-oracle. Then by AIBM, it holds the following inequality 
\begin{equation}\label{upper_bound_Ak_fk}
    A_k f(y_k) - E_k \leq \Psi^*_k, \quad \forall k \geq 0,
\end{equation}
where $\Psi^*_k = \min_{x \in Q} \Psi_k (x),$ and $ E_k = \sum_{i = 0}^{k} B_i \delta_i$. 
\end{lemma}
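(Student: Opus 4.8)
The plan is to prove \eqref{upper_bound_Ak_fk} by induction on $k$, treating $\{\Psi_k\}$ as an estimate (lower-model) sequence. For the base case $k=0$ I would use that $y_0=\arg\min_{x\in Q}\Psi_0(x)$ (line \ref{1line:yo}), so $\Psi_0^\ast=d(y_0)+\alpha_0 l_{\delta_0}(x_0,y_0)$; the exit condition of the first while loop together with the oracle relation $f(y_0)\le f_{\delta_0}(y_0)+\delta_0$ (obtained from \eqref{oracle_smooth} at $x=y=y_0$) gives $f(y_0)\le l_{\delta_0}(x_0,y_0)+L_0V(y_0,x_0)+O(\delta_0)$. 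Multiplying by $\alpha_0=1/L_0$, using $\alpha_0 L_0=1$ and $V(y_0,x_0)\le d(y_0)$ (which follows from $x_0=\arg\min_Q d$ via the first-order optimality of $d$ at $x_0$), yields $A_0 f(y_0)-B_0\delta_0\le\Psi_0^\ast$ since $A_0=B_0=\alpha_0$.

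For the inductive step, assume $A_{k-1}f(y_{k-1})-E_{k-1}\le\Psi_{k-1}^\ast$. Writing $\Psi_k(x)=\Psi_{k-1}(x)+\alpha_k l_{\delta_k}(x_k,x)$ and using $z_k=\arg\min_{x\in Q}\Psi_k$, I would lower-bound $\Psi_k^\ast=\Psi_k(z_k)$ by applying the prox-inequality \eqref{ineq:psi_smooth} to $\Psi_{k-1}$ at the point $z_k$, namely $\Psi_{k-1}(z_k)\ge\Psi_{k-1}^\ast+V(z_k,z_{k-1})$. Combined with the inductive hypothesis, this reduces the whole claim to the one-step inequality
\begin{equation*}
  A_k f(y_k)\ \le\ A_{k-1}f(y_{k-1})+\alpha_k l_{\delta_k}(x_k,z_k)+V(z_k,z_{k-1})+B_k\delta_k.
\end{equation*}

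To establish this, I would first record two structural facts: the algebraic identity $y_k=\tfrac{\alpha_k z_k+A_{k-1}y_{k-1}}{A_k}$ (obtained by substituting $w_k$ from line \ref{1line:w} into $y_k$ from line \ref{1line:y} and simplifying with $A_k=A_{k-1}+\alpha_k$), and the observation that $x_k$ and $w_k$ are the two convex combinations $(1-\theta)y_{k-1}+\theta z_{k-1}$ and $(1-\theta)y_{k-1}+\theta z_k$ sharing the common weight $\theta=\alpha_k/B_k$ and anchor $y_{k-1}$. The route is then: (i) apply convexity of $f$ to $y_k=\tfrac{B_k}{A_k}w_k+(1-\tfrac{B_k}{A_k})y_{k-1}$; (ii) bound $f(w_k)$ via the while-loop exit condition, which certifies the relaxed descent $f_{\delta_k}(w_k)\le l_{\delta_k}(x_k,w_k)+L_kV(w_k,x_k)+\delta_k$, together with $f(w_k)\le f_{\delta_k}(w_k)+\delta_k$; (iii) apply the triangular scaling property \eqref{eq_15} with that common $\theta$ to get $V(w_k,x_k)\le(\alpha_k/B_k)^\gamma V(z_k,z_{k-1})$; (iv) use the definition $B_k=(L_k\alpha_k^\gamma)^{1/(\gamma-1)}$, which gives the exact relation $L_k(\alpha_k/B_k)^\gamma=1/B_k$, so that $B_kL_kV(w_k,x_k)\le V(z_k,z_{k-1})$; and (v) use the affineness of $l_{\delta_k}(x_k,\cdot)$ together with the lower bound $l_{\delta_k}(x_k,y_{k-1})\le f(y_{k-1})$ (left inequality of \eqref{oracle_smooth}) and the ordering \eqref{eq:order} to regroup all linear terms into $\alpha_k l_{\delta_k}(x_k,z_k)+A_{k-1}f(y_{k-1})$.

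The main obstacle is the interplay of steps (iv) and (v): the telescoping closes only because the divergence penalty generated by triangular scaling is multiplied by exactly $B_kL_k(\alpha_k/B_k)^\gamma=1$, so it matches term-for-term the $V(z_k,z_{k-1})$ bonus delivered by the prox-inequality — this cancellation is precisely what the choice of $B_k$ is engineered to produce. Making the coefficients of $z_k$, $z_{k-1}$ and $y_{k-1}$ line up requires the ordering $0<\alpha_k\le B_k\le A_k$ from \eqref{eq:order}, which guarantees that the weights $A_k-B_k$ and $B_k-\alpha_k$ are nonnegative so the convexity estimate $l_{\delta_k}(x_k,y_{k-1})\le f(y_{k-1})$ can be applied with the correct sign; this is the delicate bookkeeping. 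Finally, the inexact-oracle additive terms collected in step (ii) contribute a per-iteration quantity proportional to $B_k\delta_k$, which telescopes into $E_k=\sum_{i=0}^{k}B_i\delta_i$.
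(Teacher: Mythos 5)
Your proposal is correct and follows essentially the same route as the paper's proof: induction on $k$, the prox-inequality \eqref{ineq:psi_smooth} applied to $\Psi_{k-1}$ at $z_k$, the regrouping of the linear terms via affineness of $l_{\delta_k}(x_k,\cdot)$ and the identity $A_k y_k = \alpha_k z_k + A_{k-1}y_{k-1}$, triangular scaling with $\theta = \alpha_k/B_k$, and the exact cancellation $B_k L_k\left(\alpha_k/B_k\right)^{\gamma} = 1$ produced by the choice of $B_k$, with \eqref{eq:order} ensuring the nonnegativity of the weights. The only differences are cosmetic — you verify the one-step inequality upward from $A_k f(y_k)$ while the paper chains inequalities downward from $\Psi_k^*$, and your accounting of the oracle error as ``proportional to $B_k\delta_k$'' is, if anything, more careful than the paper's, which silently absorbs the extra $\delta_k$ arising from replacing $f_{\delta_k}(w_k)$ by $f(w_k)$.
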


\begin{proof}
We prove this lemma by induction. For $k = 0$, we have
\begin{align*}
    \Psi_0^* & \;\; =\;\; \min_{x \in Q} \left\{ \Psi_0 (x) \right\}
    = \min_{x \in Q} \left\{ d(x) + \alpha_0 f_{\delta_0}(x_0) + \alpha_0 \left\langle g_{\delta_0}(x_0), x - x_0 \right\rangle\right\}
    \\& \stackrel{\text{Line} \, \ref{1line:yo}}{\geq} d(y_0) + \alpha_0 l_{\delta_0}(x_0, y_0) =  \alpha_0 \left(L_0 V(y_0, x_0) +  l_{\delta_0}(x_0, y_0)\right)
    \\& \stackrel{\text{Line} \, \ref{1line:sc1}}{\geq}  \alpha_0 f_{\delta_0}(y_0) - \delta_0 - \alpha_0 \delta_0 \geq A_0 f_{\delta_0}(y_0) - B_0 \delta_0  = A_0 f(y_0) - E_0.
\end{align*}

Assume that \eqref{upper_bound_Ak_fk} is valid for certain $k-1 \geq 0$, i.e.,
\begin{equation}\label{induction_k_1}
    A_{k-1} f(y_{k-1}) - E_{k-1} \leq \Psi_{k-1}^* = \min_{x \in Q} \Psi_{k-1}(x),
\end{equation}
and let us prove that it holds for $k$. Indeed, 
\begin{align*}
    \Psi_k^* & \;\; =\; \; \min_{x \in Q} \Psi_k(x_k) \stackrel{\text{Line} \, \ref{1line:z}}{\geq}\Psi_k(z_k) \stackrel{\eqref{Psi_smooth_case}}{=} \Psi_{k-1}(z_k) + \alpha_k l_{\delta_k} (x_k, z_k)
    \\&  \stackrel{\eqref{ineq:psi_smooth}}{\geq} \Psi_{k-1}(z_{k-1}) + V(z_k, z_{k-1}) + \alpha_k l_{\delta_k}(x_k, z_k)
     \geq \Psi_{k-1}^* + V(z_k, z_{k-1}) + \alpha_k l_{\delta_k}(x_k, z_k)
    \\& \stackrel{\eqref{induction_k_1}} {\geq} A_{k-1} f(y_{k-1}) - E_{k-1} + V(z_k, z_{k-1}) + \alpha_k l_{\delta_k}(x_k, z_k) 
    \\& \stackrel{ \text{Line} \, \ref{1line:A} }{\geq} (A_k - \alpha_k) f(y_{k-1}) - E_{k-1} + V(z_k, z_{k-1})  + \alpha_k l_{\delta_k}(x_k, z_k) 
    \\& = (A_k - B_k) f(y_{k-1}) - E_{k-1} + V(z_k, z_{k-1}) + (B_k - \alpha_t)f(y_{k-1}) + \alpha_k l_{\delta_k}(x_k, z_k) 
    \\& \stackrel{\eqref{oracle_smooth}}{\geq} (A_k - B_k) f(y_{k-1}) - E_{k-1} + V(z_k, z_{k-1}) + (B_k - \alpha_k)l_{\delta_k}(x_k, y_{k-1})  
    \\& \quad + \alpha_k l_{\delta_k}(x_k, z_k) 
    \\& = (A_k - B_k) f(y_{k-1}) - E_{k-1}  + V(z_k, z_{k-1})  + B_k f(x_k)  
    \\& \quad + \left\langle g_{\delta_k}(x_k), (B_k - \alpha_k)(y_{k-1} - x_k)+ \alpha_k (z_k - x_k) \right\rangle.
\end{align*} 

From Line \ref{1line:x} in AIBM we get
\begin{equation*}
    (B_k- \alpha_k)(y_{k-1} - x_k)+ \alpha_k (z_k - x_k)  = \alpha_k (z_k - z_{k-1}).
\end{equation*}

Thus, we conclude
\begin{align*}
    \Psi_k^* &\geq (A_k - B_k) f(y_{k-1}) - E_{k-1} + V(z_k, z_{k-1}) +  B_k f(x_k) + \alpha_k \left\langle g_{\delta_k}(x_k), z_k - z_{k-1} \right\rangle
    \\& = (A_k - B_k ) f(y_{k-1}) - E_{k-1} 
    \\& \quad + B_k \left( f_{\delta_k}(x_k) + \frac{1}{B_k} V(z_k, z_{k-1})  + \left\langle g_{\delta_k}(x_k), z_k - z_{k-1} \right\rangle \right).
\end{align*}

Because of $\frac{1}{B_k} = \frac{L_k \alpha_k^{\gamma}}{B_k^{\gamma}}$ (from Line \ref{1code:B} in AIBM), and $\frac{\alpha_t}{B_k} (z_k - z_{k-1}) = w_k - x_k$ (from Lines \ref{1line:x} and \ref{1line:w} in AIBM), we get
\begin{align*}
    \Psi_k^* & \geq  (A_k - B_k ) f(y_{k-1}) - E_{k-1}
    \\& \quad + B_k \left( f_{\delta_k}(x_k) + L_k \left(\frac{\alpha_k}{B_k}\right)^{\gamma} V(z_k, z_{k-1})  + \left\langle g_{\delta_k}(x_k), w_k - x_k \right\rangle \right).
\end{align*}

From Lines \ref{1line:x} and \ref{1line:w} in AIBM, due to $\frac{\alpha_k}{B_k} \leq 1, \forall k \geq 0$ (see \eqref{eq:order})  
and \eqref{eq_15}, we have 
\begin{align*}
    V(w_k, x_k) & = V\left(\frac{\alpha_k}{B_k} z_k + \left(1- \frac{\alpha_k}{B_k}\right)y_{k-1}, \frac{\alpha_k}{B_k} z_{k-1} + \left(1-\frac{\alpha_k}{B_k}\right)y_{k-1}\right)
    \\& \leq \left(\frac{\alpha_k}{B_k}\right)^{\gamma} V(z_k, z_{k-1}).
\end{align*}

Thus, we get
\begin{align*}
    \Psi_k^* & \;\;\; \geq \;\;  (A_k - B_k) f(y_{k-1}) -  E_{k-1} + B_k \left(l_{\delta_k}(x_k, w_k) + L_k V(w_k, x_k)\right)
    \\& \stackrel{\text{Line} \, \ref{1line:sc2}}{\geq}  (A_k - B_k) f(y_{k-1}) -  E_{k-1}  + B_k\left(f_{\delta_k}(w_k) - \delta_k \right). 
\end{align*}

From line \ref{1line:y} in AIBM, we find $A_k y_k = B_k w_k + (A_k - B_k) y_{k-1}$. Thus, since $f$ is convex, we get the following
\[
    \Psi_k^* \geq A_k f(y_k) - E_{k-1} - B_k \delta_k  = A_k f(y_k) - E_{k},
\]
which completes the proof. 
\end{proof}

From Lemma \ref{lemma_upper_bound_Ak_fk}, we can conclude the following result.
\begin{corollary}\label{corr_rate_adaptive}
Let $f$ be a convex and relatively $L$-smooth function, equipped with a first-order $(\delta, L)$-oracle. Then by AIBM, we have 
\begin{equation}\label{adaptive_estimate}
    f(y_k) - f(x_*) \leq \frac{d(x_*)}{A_k} + \frac{1}{A_k} \sum_{i = 0}^{k}B_i \delta_i, \quad \forall k \geq 0, 
\end{equation}
where $x_* = \arg\min_{x \in Q} f(x)$. 
\end{corollary}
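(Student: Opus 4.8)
The plan is to derive this corollary directly from Lemma~\ref{lemma_upper_bound_Ak_fk}, which already supplies the hard inequality $A_k f(y_k) - E_k \leq \Psi_k^*$ with $\Psi_k^* = \min_{x \in Q}\Psi_k(x)$ and $E_k = \sum_{i=0}^k B_i\delta_i$. What remains is purely to furnish a convenient \emph{upper} bound on $\Psi_k^*$ in terms of $f(x_*)$ and $d(x_*)$, after which the result follows by rearranging and dividing by $A_k$.

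First I would estimate $\Psi_k^*$ from above by evaluating $\Psi_k$ at the minimizer $x_*$ of $f$, i.e.\ $\Psi_k^* = \min_{x \in Q}\Psi_k(x) \leq \Psi_k(x_*)$. Expanding via the definition \eqref{Psi_smooth_case}, this gives
\begin{equation*}
    \Psi_k^* \leq d(x_*) + \sum_{i=0}^{k} \alpha_i\, l_{\delta_i}(x_i, x_*).
\end{equation*}
The key observation is that here one must invoke the \emph{left} inequality of the oracle \eqref{oracle_smooth}, namely $l_{\delta_i}(x_i, x) \leq f(x)$ for all $x \in Q$ (whereas the proof of Lemma~\ref{lemma_upper_bound_Ak_fk} used the right inequality). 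Applying it with $x = x_*$ yields $l_{\delta_i}(x_i, x_*) \leq f(x_*)$ for every $i$, so that
\begin{equation*}
    \Psi_k^* \leq d(x_*) + f(x_*)\sum_{i=0}^{k}\alpha_i = d(x_*) + A_k f(x_*),
\end{equation*}
where the last equality uses that $A_k = \sum_{i=0}^k \alpha_i$, which follows from $A_0 = \alpha_0$ and the update $A_k = A_{k-1} + \alpha_k$ on Line~\ref{1line:A} of AIBM.

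Combining this with the bound $A_k f(y_k) - E_k \leq \Psi_k^*$ from Lemma~\ref{lemma_upper_bound_Ak_fk} gives $A_k f(y_k) - E_k \leq d(x_*) + A_k f(x_*)$. Subtracting $A_k f(x_*)$ from both sides, substituting $E_k = \sum_{i=0}^k B_i \delta_i$, and dividing through by $A_k > 0$ then produces exactly the claimed inequality \eqref{adaptive_estimate}. There is no genuine obstacle in this argument; the only point requiring care is to use the correct side of the two-sided oracle bound \eqref{oracle_smooth} (the lower bound $l_{\delta_i} \leq f$ rather than the upper bound used inside the lemma) and to recognize that the running sum of the $\alpha_i$ equals $A_k$, so that the coefficient of $f(x_*)$ collapses cleanly.
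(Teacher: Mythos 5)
Your proof is correct and matches the paper's own argument essentially step for step: both bound $\Psi_k^*$ from above by $\Psi_k(x_*)$, apply the lower oracle inequality $l_{\delta_i}(x_i,x_*)\leq f(x_*)$ from \eqref{oracle_smooth} to collapse the sum into $A_k f(x_*)$, and then combine with Lemma~\ref{lemma_upper_bound_Ak_fk} and divide by $A_k$. Nothing to add.
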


\begin{proof}
From Lemma \ref{lemma_upper_bound_Ak_fk}, we have
\begin{align*}
    A_k f(y_k) - E_k & \; \leq \Psi_k^* =   \min _{x \in Q}\left\{d(x) + \sum_{i = 0}^{k} \alpha_i l_{\delta_i}(x_i, x) \right\} 
    \leq  d(x_*) + \sum_{i = 0}^{k} \alpha_i l_{\delta_i}(x_i, x_*)
    \\&  \stackrel{\eqref{oracle_smooth}}{\leq} d(x_*) +\sum_{i = 0}^{k} \alpha_i f(x_*) = d(x_*) + A_k f(x_*).
\end{align*}

This means, 
\[
    A_k f(y_k) - E_k \leq d(x_*) + A_k f(x_*),
\]
i.e.,
\[
    A_k \left(f(y_k) - f(x_*)\right) \leq d(x_*) + E_k = d(x_*) + \sum_{i= 0}^{k} B_i \delta_i.
\]

Therefore, 
\[
    f(y_k) - f(x^*) \leq \frac{h(x_*)}{A_k} + \frac{1}{A_k} \sum_{i= 0}^{k} B_i \delta_i, 
\]
which completes the proof. 
\end{proof}

We will prove the following theorem to evaluate the convergence rate of  Algorithm \ref{alg:AIBM} (AIBM).

\begin{theorem}\label{the:rate_adaptive_alg}
Bregman divergence $V(\cdot, \cdot)$ satisfies the triangular scaling property with scaling factor $\gamma \in (1, 2]$. Let $f$ be a convex and relatively $L$-smooth function, equipped with a first-order $(\delta, L)$-oracle. Let  $p \in [1,2]$ be the intermediate parameter, $\gamma \in (1, 2]$, and let us assume that $R_0 > 0$  is an upper bound for the distance to a solution $x_*$ from the starting point $x_0$, such that $V(x_0, x_*) = d(x_*) \leq R_0$. Then, for AIBM, the following inequality holds for any $k \geq 0$
\begin{equation}\label{rate:AIBM}
    f(y_k) - f(x_*) \leq \frac{ 16  R_0 }{(k+2)^{(p-1)(\gamma - 1)+1}} \left(\max_{0 \leq i \leq k} L_i\right)  + (k+2p)^{p-1}\left(\max_{0 \leq i \leq k} \delta_i\right).
\end{equation}
\end{theorem}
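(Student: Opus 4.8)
The plan is to start from the bound already established in Corollary \ref{corr_rate_adaptive}, namely
\[
    f(y_k) - f(x_*) \leq \frac{d(x_*)}{A_k} + \frac{1}{A_k}\sum_{i=0}^{k} B_i \delta_i ,
\]
use the hypothesis $d(x_*) \leq R_0$, and then convert the two aggregate quantities $A_k$ and $\sum_i B_i$ into the explicit expressions appearing in \eqref{rate:AIBM}. Writing $q := (p-1)(\gamma-1) \in [0,1]$, so that $q+1 = (p-1)(\gamma-1)+1 \in [1,2]$, the whole argument rests on substituting the closed forms of $\alpha_k$ (Line \ref{1line:alphas}) and $B_k$ (Line \ref{1code:B}). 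A direct computation gives $B_k = \frac{1}{L_k}\bigl(1+\frac{k}{2p}\bigr)^{(p-1)\gamma}$, which combined with the formula for $\alpha_k$ yields the key identity $B_k = \alpha_k\bigl(1+\frac{k}{2p}\bigr)^{p-1}$.

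First I would dispose of the noise (second) term. Since $p \geq 1$ we have $1+\frac{i}{2p} = \frac{i+2p}{2p} \leq i+2p$, so the identity above gives $B_i \leq (k+2p)^{p-1}\alpha_i$ for every $i \leq k$. Summing and recalling $A_k = \sum_{i=0}^k \alpha_i$ produces $\sum_{i=0}^k B_i \leq (k+2p)^{p-1} A_k$, whence
\[
    \frac{1}{A_k}\sum_{i=0}^{k} B_i \delta_i \leq \frac{\max_{0\leq i\leq k}\delta_i}{A_k}\sum_{i=0}^{k} B_i \leq (k+2p)^{p-1}\max_{0\leq i\leq k}\delta_i ,
\]
which is exactly the second term of \eqref{rate:AIBM}.

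The more delicate part is the lower bound on $A_k$ that yields the first term. Because the adaptively chosen constants $L_i$ need not be monotone, I would estimate them uniformly as $\frac{1}{L_i}\geq \frac{1}{L_{\max}}$ with $L_{\max} := \max_{0\leq i\leq k} L_i$, so that $\alpha_i \geq \frac{1}{L_{\max}}\bigl(1+\frac{i}{2p}\bigr)^{q}$. Using $i+2p \geq i+2$ and the sum-to-integral comparison $(i+2)^q \geq \int_i^{i+1}(t+1)^q\,dt$ (valid as $q\geq 0$), one obtains
\[
    \sum_{i=0}^{k}\Bigl(1+\tfrac{i}{2p}\Bigr)^{q} = \frac{1}{(2p)^q}\sum_{i=0}^{k}(i+2p)^q \geq \frac{1}{(2p)^q}\cdot\frac{(k+2)^{q+1}-1}{q+1}.
\]
Bounding the harmless constants $(2p)^q \leq 4$ and $q+1 \leq 2$, and using $(k+2)^{q+1}\geq 2^{q+1}\geq 2$ to absorb the $-1$, this gives $A_k \geq \frac{(k+2)^{q+1}-1}{8L_{\max}} \geq \frac{(k+2)^{q+1}}{16 L_{\max}}$, so that $\frac{R_0}{A_k} \leq \frac{16 R_0}{(k+2)^{q+1}}L_{\max}$, the first term of \eqref{rate:AIBM}. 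Adding the two contributions completes the proof.

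The main obstacle I anticipate is precisely this lower bound on $A_k$: one must track the exponent $q+1=(p-1)(\gamma-1)+1$ correctly through the sum-to-integral comparison, and then arrange the several crude constant estimates ($(2p)^q\leq 4$, $q+1\leq 2$, $(k+2)^{q+1}\geq 2$) so that the clean factor $16$ emerges \emph{uniformly} for all $k\geq 0$ rather than only asymptotically. The non-monotonicity of the $L_i$ generated by the inner \texttt{while}-loops is what forces $\max_{0\leq i\leq k}L_i$ and $\max_{0\leq i\leq k}\delta_i$ to appear in the bound in place of single constants.
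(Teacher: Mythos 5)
Your proposal is correct and follows essentially the same route as the paper's proof: it starts from Corollary \ref{corr_rate_adaptive}, uses the identity $B_i = \bigl(1+\tfrac{i}{2p}\bigr)^{p-1}\alpha_i$ to bound the noise sum by $(k+2p)^{p-1}A_k \max_i \delta_i$, and lower-bounds $A_k$ via a sum-to-integral comparison of $\sum_i \bigl(1+\tfrac{i}{2p}\bigr)^{(p-1)(\gamma-1)}$. The only differences are in the routine constant bookkeeping (you absorb the $-1$ from the integral and use $(2p)^q\leq 4$, $q+1\leq 2$, whereas the paper bounds $\tfrac{k+2p}{2p}\geq\tfrac{k+2}{4}$ and $4^{q+1}\leq 16$), and your handling is in fact slightly cleaner than the paper's intermediate steps.
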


\begin{proof}

First, for $\{\alpha_k\}_{k \geq 0}$ and  $ \{B_k\}_{k \geq 0}$ we have (see \eqref{eq:order}) 
\[
    0 < \alpha_{k+1} \leq B_{k+1} \leq A_{k+1}.
\]

Let us find the lower bound for $A_k$ and the upper bound for $\sum_{i = 0}^{k}B_{i} \delta_i$. 
\begin{itemize}
\item 
For $A_k$, we have
\begin{equation*}
    \alpha_k = \frac{1}{L_k} \left(\frac{k + 2p}{2p} \right)^{(p-1)(\gamma - 1)} \geq \frac{1}{\max_{0 \leq i \leq k} L_i} \left(\frac{k + 2p}{2p} \right)^{(p-1)(\gamma - 1)}.
\end{equation*}
Thus, 
\[
    A_k = \sum_{i = 0}^{k} \alpha_i \geq \frac{1}{\max_{0 \leq i \leq k} L_i} \sum_{i = 0}^{k} \left(\frac{i + 2p}{2p} \right)^{(p-1)(\gamma - 1)}.
\]
But
\begin{align*}
     \sum_{i = 0}^{k} \left(\frac{i+2p}{2p}\right)^{(p-1)(\gamma - 1)} & \geq \int_{0}^{k} \left(\frac{x+2p}{2p}\right)^{(p-1)(\gamma - 1)} dx + \alpha_0 
    \\& \geq \frac{2}{(p-1)(\gamma - 1) + 1} \left(\frac{k+2p}{2p}\right)^{(p-1)(\gamma - 1) + 1}
    \\& \geq \left(\frac{k+2p}{2p}\right)^{(p-1)(\gamma - 1) + 1}.
\end{align*}
Therefore, since $p \in [1,2]$, we get
\begin{align}\label{lower_bound_Ak}
    A_k & \geq \frac{1}{ \max_{0 \leq i \leq k} L_i}  \left(\frac{k+2p}{2p}\right)^{(p-1)(\gamma - 1) + 1}  \nonumber 
    \\& \geq \frac{1}{\max_{0 \leq i \leq k} L_i}  \left(\frac{k+2}{4}\right)^{(p-1)(\gamma - 1) + 1}.
\end{align}

\item 
For $\sum_{i=0}^{k}B_i \delta_i$, from line \ref{1line:alphas} in AIBM, we have
\begin{align*}
    B_i & = \left(L_i \alpha_i^{\gamma}\right)^{1/(\gamma - 1)} = L_i^{1/(\gamma - 1)} \alpha_i^{1/(\gamma - 1)} \alpha_i
    \\& = L_i^{1/(\gamma - 1)} \left(\frac{1}{L_i} \left(1+ \frac{i}{2p}\right)^{(p-1)(\gamma - 1)} \right)^{1/(\gamma - 1)} \alpha_i = \left(\frac{i + 2p}{2p}\right)^{p-1} \alpha_i. 
\end{align*}

Thus, 
\begin{align}\label{upper_Bidi}
    \sum_{i=0}^{k}B_i \delta_i & = \sum_{i=0}^{k} \left(\frac{i + 2p}{2p}\right)^{p-1} \alpha_i \delta_i \leq \left(\frac{k + 2p}{2p}\right)^{p-1} \left(\max_{0 \leq i \leq k}  
    \delta_i\right)   \sum_{i=0}^{k} \alpha_i        \nonumber 
    \\&  =  \left(\frac{k + 2p} {2p}\right)^{p-1}  \left(\max_{0 \leq i \leq k} \delta_i\right) A_k. 
\end{align}
\end{itemize}

From \eqref{lower_bound_Ak}, \eqref{upper_Bidi} and Corollary \ref{corr_rate_adaptive}, we have 
\begin{align*}
    f(y_k) - f(x_*) & \leq \frac{4^{(p - 1)(\gamma - 1) + 1}d(x_*)}{(k+2)^{(p - 1)(\gamma - 1) + 1}}\left(\max_{0 \leq i \leq k} L_i\right)  + \frac{1}{A_k} \left(\frac{k + 2p} {2p}\right)^{p-1}  \left(\max_{0 \leq i \leq k} \delta_i\right) A_k
    \\& \leq \frac{16 R_0}{(k+2)^{(p - 1)(\gamma - 1) + 1}}\left(\max_{0 \leq i \leq k} L_i\right)  + (k+2p)^{p-1}\left(\max_{0 \leq i \leq k} \delta_i\right),
\end{align*}
which is the desired convergence rate of AIBM.   
\end{proof}

\begin{remark}\label{rem:exact_setting}
Note, that when we consider the Euclidean setting of the problem, i.e., $d(x) = \frac{1}{2}\|x\|_2^2, V(x,y) = \frac{1}{2}\|x-y\|_2^2$ and the uniform triangle scaling exponent $\gamma =2$ \cite{hanzely2021accelerated}, for problem \eqref{main_smooth_problem} we get the following convergence rate of AIBM
\begin{equation*}
    f(y_k) - f(x_*)  \leq \frac{16 R_0}{(k+2)^{p}}\left(\max_{0 \leq i \leq k} L_i\right) + (k+2p)^{p-1}\left(\max_{0 \leq i \leq k} \delta_i\right), \quad \forall k \geq 0. 
\end{equation*}
\end{remark}

\begin{remark}\label{rem:alg_with_var_p}
For the problem \eqref{main_smooth_problem}, depending on \eqref{rate:AIBM}, we can reconstruct AIBM and propose an adaptive algorithm with the adaptation to two parameters $L$ and $p$ (see Algorithm \ref{Adap_Lp:AIBM}). 
In this algorithm, with a fixed value of $\gamma \in (1,2]$ (depending on the prox-function $d$ and the setting of the problem), we start from $p = 2$, which is the case for which the estimate \eqref{rate:AIBM} is the best at the first iterations. Then at the iterations, when the estimate \eqref{rate:AIBM} deteriorates we decrease $p$ until $p=1$. As a result, Algorithm \ref{Adap_Lp:AIBM} ensures a solution to the problem  \eqref{main_smooth_problem} with an estimate better than the estimate with fixed $ p \in [1, 2]$ as in AIBM (Algorithm \ref{alg:AIBM}). 

\begin{algorithm}[H]
\caption{Adaptive Intermediate Bregman Method in both parameters $L$ and $p$ (AIBM with variable $p$).}\label{Adap_Lp:AIBM}
\begin{algorithmic}[1]
   \STATE \textbf{Input:} initial point $x_0$ such that $V(x_0, x_*) \leq R_0$, $L_0 > 0$, positive sequence $\{\delta_k\}_{k \geq 0}, \gamma \in (1,2]$, small enough parameter $\eta \ll 1$.
  \STATE Set $p_0:= 2$, and  $S_0 = \frac{16 R_0 L_0}{2^{\gamma}} + 4 \delta_0$.
   \FOR{$k = 0, 1,   \ldots$}
   \STATE Run AIBM (Algorithm \ref{alg:AIBM}) with parameters $x_k, L_k$, and  $p_k$, i.e., run AIBM $(x_k, L_k, p_k)$.
   \STATE
   Calculate
   \[
        S_{k+1} : = \frac{ 16 R_0 }{(k+3)^{(p_k - 1)(\gamma - 1)} +1} \left(\max_{0 \leq i \leq k+1} L_i \right) +  (k+1 + 2 p_k)^{p_k-1}\left(\max\limits_{0 \leq i \leq k+1} \delta_i\right).
   \]
   \STATE  \textbf{If} $S_{k+1} \leq S_{k}$, \textbf{then} go to the next iteration $k \longrightarrow k+1$. \textbf{Else} run AIBM $(x_k, L_k, p_k - \eta)$. 
   \ENDFOR
\end{algorithmic}
\end{algorithm}
\end{remark}

\begin{remark}
Considering an exact oracle, i.e., $\delta_k = 0 \, (\forall k \geq 0)$, thus $f_{\delta}(x) = f(x), \nabla_{\delta} f(x) = \nabla f(x)$, for any $x \in Q$, we get the following convergence rate of AIBM
\[
    f(y_k) - f(x_*) \leq \frac{16 R_0}{(k+2)^{(p - 1)(\gamma - 1) + 1}}\left(\max_{0 \leq i \leq k} L_i\right) , \quad \forall k \geq 0.
\]
Thus, the intermediate method exhibits intermediate behaviors, and the performance of the algorithm increases when $p$ increases between $1$ and $2$ for fixed values of $\gamma \in (1, 2]$.
\end{remark}

\section{Numerical Experiments}\label{sect:numerical_exper}

In this section, to demonstrate the performance of the proposed Algorithms \ref{alg:cap} (AdapFGM), \ref{alg:cap_1} (AccBPGM-1), \ref{alg:cap_2} (AccBPGM-2), and \ref{alg:AIBM} (AIBM) we conduct some numerical experiments for the Poisson linear inverse problem. 

Let us consider the Poisson inverse problem \eqref{problem_invPoisson} (see Example \ref{ex:InvPois}) with $\psi(x) = 0$. At the first, we compare the proposed algorithms AdapFGM, AccBPGM-1, and AccBPGM-2 with the non-accelerated adaptive Bregman proximal gradient method (BPG-Adapt) \cite{BPG}. We run these algorithms by taking $m = 150, n = 100$, and $x_0$ is an initial random generated point near the center of the feasible region $Q$. The matrix $A \in \mathbb{R}^{m \times n}$ and the vector $b \in \mathbb{R}_{++}^{m}$ are randomly generated by the uniform distribution on the interval $[0, 1]$.  The results of the work of these compared algorithms are represented in Fig. \ref{fig_Poisson1} below. These results demonstrate the difference $F(x_k) - F_*$, where $F_* = F (x_*)$ is the optimal value of the objective function, with different values of the triangular scaling factor $\gamma$.   

\begin{figure}[htp]
\centering
{\resizebox*{\columnwidth}{!}{\includegraphics{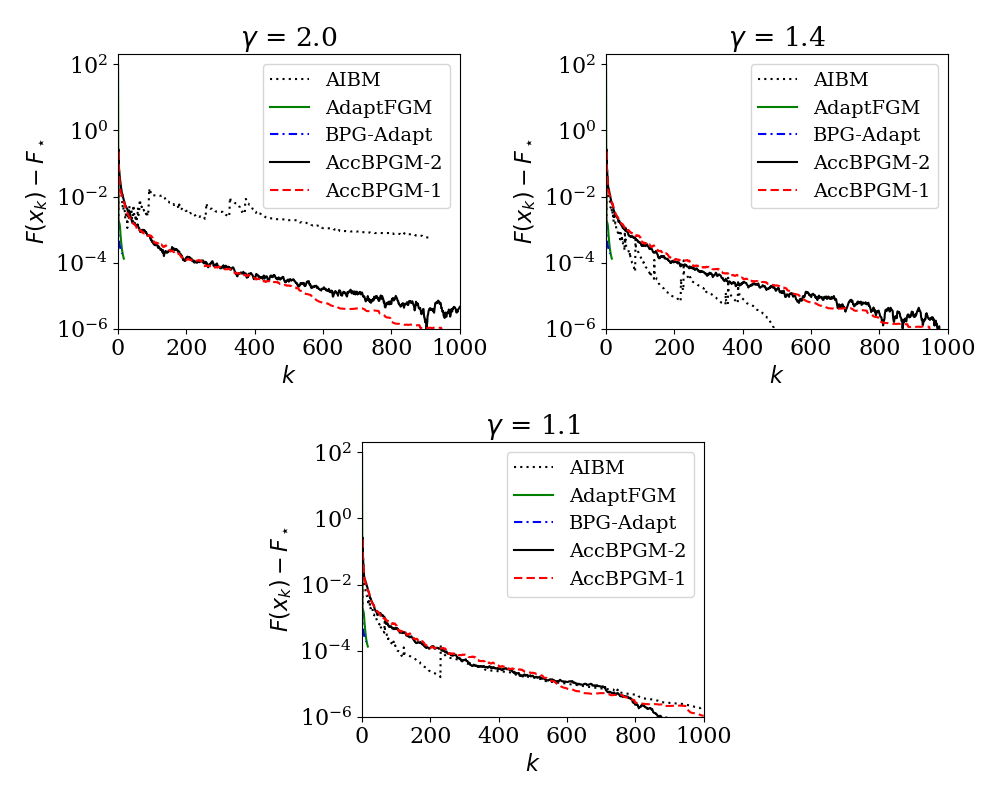}}}
  \caption{The results of Algorithms \ref{alg:cap} (AdapFGM), \ref{alg:cap_1} (AccBPGM-1), \ref{alg:cap_2} (AccBPGM-2), \ref{alg:AIBM} AIBM, and BPG-Adapt \cite{BPG}, for Poisson inverse problem with $m = 150, n = 100$ and and different values of $\gamma$. }
\label{fig_Poisson1}
\end{figure}

In Fig. \ref{fig_Poisson1}, when $\gamma = 2$, we can see that AccBPGM-1 showed the best results. Also, all the proposed algorithms give better results than the usual BPG except the first plot where AIBM shows poor convergence. Whereas with a decreasing $\gamma$, when $\gamma = 1.4$, it can be seen that the quality of Algorithms AccBPGM-1 and AccBPGM-2, in the first iterations, begin to deteriorate. But when $\gamma = 1.1$, we find that as $\gamma$ decreases, the quality of AccBPGM-1 and AccBPGM-2 decreases and the AIBM gives the best result. From the graphs above, we observe that the AdaptFGM algorithm demonstrates very high convergence speed in the initial iterations but quickly ceases to progress, stopping too early.

\bigskip 

%
%

Now let us introduce interference represented by a random variable $\delta_k > 0$ and check how it will affect the convergence of the AIBM for the same Poisson inverse problem. The random variable will follow a uniform distribution with different expectations. The results are shown in Fig. \ref{poisson_prob_noise}. We can clearly see that the former advantage of the algorithm is preserved, we can also see that when $\gamma$ decreases, the influence of interference decreases.

\begin{figure}[h]
    \includegraphics[width=\columnwidth]{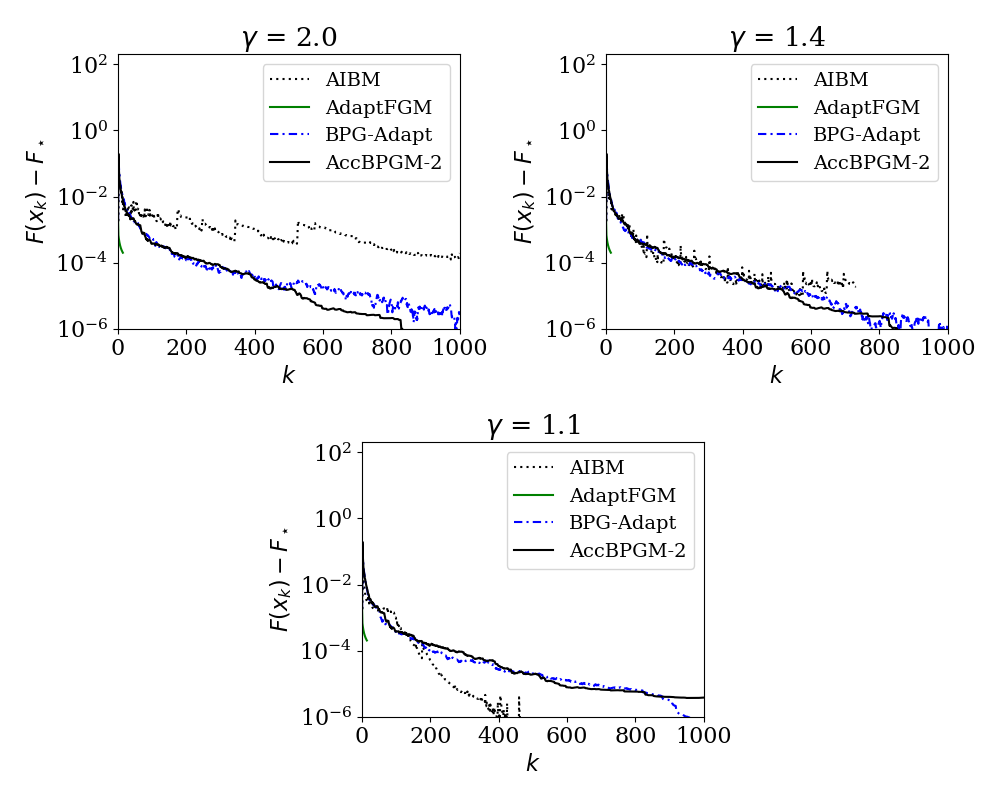}
    \centering
    \caption{The results of the compared algorithms, with noise, for the Poisson inverse problem. All other problem parameters are still the same as in the previous experiment.}
    \label{poisson_prob_noise}
\end{figure}

Further, we again consider a random value $\delta > 0$ and check how it will affect the convergence of the AIBM for different values of $\delta$. The random variable will follow a uniform distribution with different expectations. Fig. \ref{noise_experiments} shows the result with a uniformly distributed random variable $\delta$. As we can see, although the level of expectation $\delta$ affects the amount of deviations of the algorithm values, the dynamics of the algorithm convergence remain the same. 

\begin{figure}[h]
    \includegraphics[width=\columnwidth]{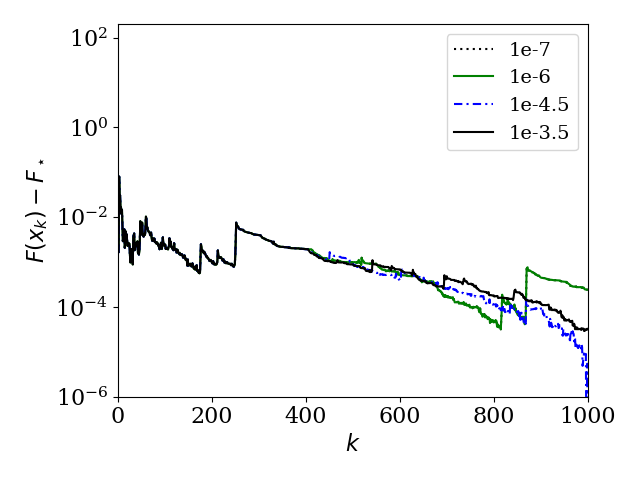}
    \centering
    \caption{The results of the AIBM, with different $\delta$ expectancy, uniform distribution, for the Poisson inverse problem. All other problem parameters are still the same as in the previous experiment.}
    \label{noise_experiments}
\end{figure}

\section{Conclusion}

In this paper, we proposed some accelerated methods for solving the problem of minimizing the convex differentiable and relatively smooth function.  The first proposed method is an adaptive fast gradient method based on a similar triangles method with an inexact oracle, which uses a special triangular scaling property, for the used Bregman divergence.   The other two proposed methods are an adaptive accelerated Bregman proximal gradient method using an inexact oracle and its adaptive version.  The adaptivity in all proposed methods is for the relatively smoothness parameter.  We concluded the convergence rate of all proposed methods and proved that they are universal in the sense that they are applicable not only to relatively smooth but also to relatively Lipschitz continuous optimization problems.  We also proposed an automatically adaptive intermediate Bregman method which interpolates between slower but more robust algorithms non-accelerated and faster, but less robust accelerated algorithms. In the end, we introduced some results of the numerical experiments that were conducted, demonstrating the advantages of using the proposed algorithms for the Poisson inverse problem. Despite the accumulation of the inexactness, as shown in the estimation of the convergence rate of the proposed algorithms, the experimental results for the proposed methods were better than those of the accelerated and non-accelerated variants of the proximal gradient Bregman method. The best performance compared to other approaches was shown by the adaptive version of the accelerated proximal gradient Bregman method with an inexact oracle. With increasing the value of the triangular scaling parameter, the accuracy and convergence of accelerated proximal methods improved. 

\section*{Disclosure statement}
The authors declare no conflicts of interest.

\section*{Funding}
The research in Sections 3, 4, and 6  was supported by the Ministry of Science and Higher Education of the Russian Federation as part of the State Assignment No. 075-03-2024-074 for the project "Investigation of asymptotic characteristics of oscillations in differential equations and systems, as well as optimization methods".




\end{document}